\newtheorem{theorem}{Theorem}[section]
\newtheorem{lemma}[theorem]{Lemma}
\newtheorem{prop}[theorem]{Proposition}
\theoremstyle{definition}
\newtheorem{definition}[theorem]{Definition}
\newtheorem{example}[theorem]{Example}
\theoremstyle{remark}
\newtheorem{remark}[theorem]{Remark}
\newcommand{\Z}{\mathbb{Z}}
\newcommand{\Q}{\mathbb{Q}}
\newcommand{\N}{\mathbb{N}}
\newcommand{\F}{\mathbb{F}}
\newcommand{\Ann}{\mathrm{Ann}}
\newcommand{\Gal}{\mathrm{Gal}}
\newcommand{\Tr}{\mathrm{Tr}}
\numberwithin{equation}{section}
\title[Explicit Galois module structure of weakly ramified extensions]{Explicit integral Galois module structure of \\weakly ramified extensions of local fields}
\author{Henri Johnston}
\address{
Department of Mathematics\\
University of Exeter\\
Exeter\\
EX4 4QF\\
U.K.
}
\email{H.Johnston@exeter.ac.uk}
\urladdr{http://emps.exeter.ac.uk/mathematics/staff/hj241}
\subjclass[2010]{Primary 11R33, 11S15}
\keywords{Local fields, weakly ramified extensions, normal integral basis}
\date{Version of 15th September 2014}
\begin{document}

\maketitle

\begin{abstract}
Let $L/K$ be a finite Galois extension of complete local fields with finite residue fields and let $G=\Gal(L/K)$.
Let $G_{1}$ and $G_{2}$ be the first and second ramification groups. 
Thus $L/K$ is tamely ramified when $G_{1}$ is trivial and we say that $L/K$ is weakly ramified when $G_{2}$ is trivial. 
Let $\mathcal{O}_{L}$ be the valuation ring of $L$ and let $\mathfrak{P}_{L}$ be its maximal ideal.
We show that if $L/K$ is weakly ramified and $n \equiv 1 \bmod |G_{1}|$ then $\mathfrak{P}_{L}^{n}$ 
is free over the group ring $\mathcal{O}_{K}[G]$, and we construct an explicit generating element.
Under the additional assumption that $L/K$ is wildly ramified, we then show that every free generator of
$\mathfrak{P}_{L}$ over $\mathcal{O}_{K}[G]$ is also a free generator of 
$\mathcal{O}_{L}$ over its associated order in the group algebra $K[G]$.
Along the way, we prove a `splitting lemma' for local fields, which may be of independent interest.
\end{abstract}

\maketitle


\section{Introduction}

Let $L/K$ be a finite Galois extension of complete local fields with finite residue fields and let $G=\Gal(L/K)$.
Let $\mathcal{O}_{L}$ be the valuation ring of $L$ and let $\mathfrak{P}_{L}$ be its maximal ideal. 
We recall that for $i \geq -1$ the ramification groups of $L/K$ are
\[
G_{i} := \{ \sigma \in G \mid (\sigma-1)(\mathcal{O}_{L}) \subseteq \mathfrak{P}_{L}^{i+1} \}.
\]
Thus $L/K$ is unramified if and only if $G_{0}$ is trivial and is tamely ramified if and only if $G_{1}$ is trivial.
We say that $L/K$ is weakly ramified if and only if $G_{2}$ is trivial.
In the case that $L/K$ is weakly ramified we shall consider 
the structure of both fractional ideals
$\mathfrak{P}_{L}^{n}$ with $n \equiv 1 \bmod |G_{1}|$ over the group ring $\mathcal{O}_{K}[G]$ 
and of $\mathcal{O}_{L}$ over its associated order
$\mathfrak{A}_{L/K} := \{ x \in K[G] \mid x\mathcal{O}_{L} \subseteq \mathcal{O}_{L} \}$.

A result often attributed to E.\ Noether is that if $L/K$ is tamely ramified then
$\mathcal{O}_{L}$ is free (of rank $1$) as a module over the group ring $\mathcal{O}_{K}[G]$;
in fact as noted in \cite[\S 1]{MR1373957} she only stated and proved the result in the case that the residue characteristic of $K$
does not divide $|G|$ (see \cite{0003.14601}).
Ullom \cite{MR0263790} proved the following: 
$L/K$ is tamely ramified if and only if every non-zero fractional ideal in $L$ is free over $\mathcal{O}_{K}[G]$;
if any non-zero fractional ideal of $L$ is free over $\mathcal{O}_{K}[G]$ then $L/K$ must be weakly ramified;
and if $L/K$ is totally and weakly ramified then $\mathfrak{P}_{L}$ is free over $\mathcal{O}_{K}[G]$.
K\"ock  \cite[Th.\ 1.1]{MR2089083} used cohomological methods to show the more general result that
$\mathfrak{P}_{L}^{n}$ is a free $\mathcal{O}_{K}[G]$-module (of rank $1$) if and only if $L/K$ is weakly ramified and $n \equiv 1 \bmod |G_{1}|$;
this also follows from a minor variant of work of Erez \cite[Th.\ 1]{MR1128708} on the square root of the inverse different
or can be proved by the methods developed in Ullom's papers \cite{MR0237473,MR0240082,MR0263790}.

The results discussed above do not give explicit generators. 
However, Kawamoto \cite{MR825142} gave an elementary proof of the fact that if $L/K$ is tamely ramified then 
$\mathcal{O}_{L}$ is free over $\mathcal{O}_{K}[G]$, and constructed an explicit generator along the way;
from this one easily obtains the analogous result for fractional ideals.
(Chapman \cite{MR1373957} also gave a proof of the result for fractional ideals similar to that of Kawamoto.)
The following theorem is a generalisation of these results to weakly ramified extensions.

\begin{theorem}\label{thm:main-ideal-thm}
Let $L/K$ be a weakly ramified finite Galois extension of complete local fields with finite residue fields. 
Let $G=\Gal(L/K)$ and let $n \in \Z$ such that $n \equiv 1 \bmod |G_{1}|$.
Then one can explicitly construct a free generator $\varepsilon$ of 
$\mathfrak{P}_{L}^{n}$ over $\mathcal{O}_{K}[G]$.
(The explicit description of $\varepsilon$ is given in \S \ref{sec:explicit-gen}.)
\end{theorem}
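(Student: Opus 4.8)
The plan is to reduce the statement to a question of $\mathcal{O}_{K}$-linear independence and then to settle that question via the ramification filtration, generalising the tame computation of Kawamoto. Since $\mathcal{O}_{K}$ is a complete discrete valuation ring, both $\mathcal{O}_{K}[G]$ and $\mathfrak{P}_{L}^{n}$ are free $\mathcal{O}_{K}$-modules of rank $|G|=ef$, where $e=|G_{0}|$ and $f=[G:G_{0}]$. Hence the multiplication map $\lambda \mapsto \lambda\varepsilon$ is an isomorphism of $\mathcal{O}_{K}[G]$-modules precisely when it is an isomorphism of free $\mathcal{O}_{K}$-modules of equal rank, and by Nakayama's lemma this holds if and only if the images of $\{\sigma\varepsilon : \sigma \in G\}$ span the $\overline{K}$-vector space $\mathfrak{P}_{L}^{n}/\mathfrak{P}_{K}\mathfrak{P}_{L}^{n} = \mathfrak{P}_{L}^{n}/\mathfrak{P}_{L}^{n+e}$. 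As this quotient has $\overline{K}$-dimension $ef=|G|$, spanning is equivalent to $\overline{K}$-linear independence of the $|G|$ reductions $\overline{\sigma\varepsilon}$. So it suffices to construct $\varepsilon$ and to prove this independence.

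To construct $\varepsilon$ I would separate the three layers of the filtration $G \supseteq G_{0} \supseteq G_{1} \supseteq G_{2}=1$. Lift a normal basis generator $\overline{\theta}$ of the residue extension $\overline{L}/\overline{K}$ (which is cyclic with group $G/G_{0}$) to a unit $\theta \in \mathcal{O}_{L}$; this accounts for the $f$ residue directions. Then fix a uniformiser $\pi_{L}$ of $L$ and take $\varepsilon = \theta\beta$, where $\beta$ is a suitable $\mathcal{O}_{K}$-combination, with unit coefficients, of the powers $\pi_{L}^{n}, \pi_{L}^{n+1}, \dots, \pi_{L}^{n+e-1}$; in the tame case one can take $\beta=\sum_{i=0}^{e-1}\pi_{L}^{n+i}$. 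The purpose of including a full block of $e$ consecutive powers is that, as $\sigma$ runs over $G$, the residue directions and the graded pieces of $\mathfrak{P}_{L}^{n}/\mathfrak{P}_{L}^{n+e}$ swept out by $\sigma\varepsilon$ should together exhaust all $|G|$ dimensions.

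The main obstacle is the wild inertia group $G_{1}$. A short computation shows that $\sigma\pi_{L} \equiv \pi_{L} + c_{\sigma}\pi_{L}^{2} \pmod{\mathfrak{P}_{L}^{3}}$ for $\sigma \in G_{1}$, that $\sigma \mapsto \overline{c_{\sigma}}$ is an additive homomorphism $G_{1} \to (\overline{L},+)$, and that weak ramification ($G_{2}=1$) forces $v_{L}(\sigma\pi_{L}-\pi_{L})=2$ exactly for $\sigma\neq 1$, so this homomorphism is injective (recovering that $G_{1}$ is elementary abelian). The difficulty is that $G_{1}$ acts trivially on each single graded quotient $\mathfrak{P}_{L}^{j}/\mathfrak{P}_{L}^{j+1}$, so the reductions $\overline{\sigma\varepsilon}$ for $\sigma$ in a fixed $G_{1}$-coset all share the same leading term and cannot be distinguished at top order; one must descend to the next graded piece, where $\sigma$ translates $\pi_{L}^{j}$ by $j\,c_{\sigma}\pi_{L}^{j+1}$. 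Extracting $|G_{1}|$ independent vectors from a single coset therefore requires combining several consecutive graded pieces and tracking how the injective character $\overline{c_{(-)}}$ propagates through them. This is exactly the bookkeeping that I expect the splitting lemma to package: it should let me choose $\pi_{L}$ and adjust $\beta$ so that the contribution of $G_{1}$ is cleanly separated from that of $G_{0}/G_{1}$, reducing the wild case to an explicit, unipotent-type linear independence statement.

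With the splitting lemma in hand, I would finish by analysing the reductions $\overline{\sigma\varepsilon}$ through the graded pieces of $\mathfrak{P}_{L}^{n}/\mathfrak{P}_{L}^{n+e}$. The $f$ residue directions are separated by the normal basis property of $\overline{\theta}$; the $e_{0}:=[G_{0}:G_{1}]$ tame directions are separated because the $e$ graded pieces carry the powers $\chi^{j}$ of the tame character $\chi(\sigma)=\overline{\sigma\pi_{L}/\pi_{L}}$, which realise each character of $G_{0}/G_{1}$ and so reproduce the tame computation; and the $e_{1}:=|G_{1}|$ wild directions are separated by the injective second-order data $\overline{c_{(-)}}$ above. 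Assembling these into a block-triangular system with invertible diagonal blocks over $\overline{K}$ shows that the $|G|$ elements $\overline{\sigma\varepsilon}$ are linearly independent, hence a basis; by the first paragraph this proves that $\varepsilon$ is a free generator. The congruence $n\equiv 1 \pmod{|G_{1}|}$ should enter exactly in making the wild block non-singular, matching the known necessity of this condition.
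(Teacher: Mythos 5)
Your reduction in the first paragraph is fine (it is the paper's Lemma \ref{lem:generalities-NIBs}), but the heart of the theorem is never proved. The wild block --- showing that the $|G_{1}|$ translates of $\varepsilon$ under a coset of $G_{1}$ are $\overline{K}$-linearly independent --- is exactly the hard part, and your proposal defers it twice: first to ``bookkeeping that I expect the splitting lemma to package,'' and then to an asserted ``block-triangular system with invertible diagonal blocks,'' with the congruence $n\equiv 1 \bmod |G_{1}|$ only conjectured to ``enter exactly in making the wild block non-singular.'' The splitting lemma cannot do this job: in the paper (Lemma \ref{lem:splitting-lemma}) it is a purely group-theoretic statement (Schur--Zassenhaus plus the Frattini argument) which, after an unramified base change $L'=LK'$, realises the Galois group as iterated semi-direct products; it says nothing about graded pieces of $\mathfrak{P}_{L}^{n}$ or about the cocycle $\sigma\mapsto c_{\sigma}$. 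The paper's actual wild-case argument (Theorem \ref{thm:tot-weak-p-extension}) avoids your cascade entirely: it uses that $\overline{K}[G_{1}]$ is local with unique minimal left ideal $\overline{K}\cdot\Tr_{G_{1}}$ (Proposition \ref{prop:non-zero-trace}), combined with Hilbert's formula $v_{L}(\mathfrak{D}_{L/K})=2|G|-2$ and the resulting computation of $\Tr_{L/K}(\mathfrak{P}_{L}^{i})$ --- and this is precisely where $n\equiv 1\bmod |G_{1}|$ is used. The propagation-of-$c_{\sigma}$ computation you sketch is essentially the Ullom/Vostokov direct approach, which can be made to work but is nontrivial; you have not carried it out. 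Separately, your proposal has no mechanism for extensions in which $G$ does not split over inertia (complements need not exist in $G$ itself); the paper handles this by proving the result for the doubly split extension $L'/K$ and then descending via $\varepsilon=\Tr_{L'/L}(\varepsilon')$, using surjectivity of the trace on ideals in unramified extensions (Theorem \ref{thm:trace-explicit-gen}).

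There is also a concrete error in the construction: the recipe fails for an arbitrary uniformizer $\pi_{L}$, already in the tame case where you claim $\beta=\sum_{i=0}^{e-1}\pi_{L}^{n+i}$ works. Take $K=\Q_{p}$ with $p$ odd, $L=K(\sqrt{p})$, $n=1$, and the uniformizer $\pi_{L}=\sqrt{p}-p$. Then
\[
\varepsilon=\pi_{L}+\pi_{L}^{2}=(1-2p)\sqrt{p}+p^{2},
\qquad
\mathcal{O}_{K}[G]\cdot\varepsilon=\sqrt{p}\,\Z_{p}+p^{2}\Z_{p}\subsetneq \sqrt{p}\,\Z_{p}+p\Z_{p}=\mathfrak{P}_{L},
\]
so $\varepsilon$ is not a free generator (equivalently, writing $\varepsilon=\sqrt{p}\,\alpha$ one gets $\alpha=(1-2p)+p\sqrt{p}$, whose second coefficient is not a unit, violating the criterion of Proposition \ref{prop:tot-tame-gen}). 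The paper avoids this by choosing the uniformizer with $\pi_{L}^{e}=\pi_{K}$ (Lemma \ref{lem:total-tame-Galois-Kummer}), and in general by taking uniformizers $\pi_{S},\pi_{T}$ inside specific fixed fields together with B\'ezout exponents $a,b$ with $ap^{r}+bc=1$. For the same reason your $\theta$ cannot be an arbitrary unit lift of a residue normal basis generator: the paper takes it inside the inertia field $L^{I}$, so that the inertia group fixes it, and this is used essentially in the direct-sum manipulation of Proposition \ref{prop:split-weak-ram}.
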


In \S \ref{sec:prelims} we cover some preliminary material, including the (well-known) constructions
of generators for unramified extensions and for totally and tamely ramified extensions.
In \S \ref{sec:splitting-lemma} we prove a `splitting lemma' that says that 
any for finite Galois extension of complete local fields $L/K$ with finite residue fields there exists a finite 
unramified extension $L'/L$ such that $L'/K$ is `doubly split' (see Definition \ref{def:splittings}).
Suppose that $L/K$ is weakly ramified.
Then $L'/K$ is also weakly ramified and we give an explicit description of a free generator $\varepsilon'$
of $\mathfrak{P}_{L'}^{n}$ over $\mathcal{O}_{K}[\Gal(L'/K)]$ in \S \ref{sec:explicit-gen};
moreover, we show that the trace $\varepsilon:=\Tr_{L'/L}(\varepsilon')$ is a
free generator of $\mathfrak{P}_{L}^{n}$ over $\mathcal{O}_{K}[G]$.
Thus we are reduced to verifying that $\varepsilon'$ is indeed a generator as claimed,
which we do as follows.
Let $p>0$ be the residue characteristic of $K$. 
In \S \ref{sec:tot-weak-p-extns} we give a short and elementary proof of the fact that
if $L/K$ is a totally and weakly ramified $p$-extension then 
any uniformizer $\pi_{L}$ is a free generator of $\mathfrak{P}_{L}$ over $\mathcal{O}_{K}[G]$;
as explained in Remark \ref{rmk:p-power-other-proofs}, this particular result has already been proven by a number of others.
In \S \ref{sec:tot-weak} we treat the case of totally and weakly ramified extensions of arbitrary degree by carefully
`glueing together' generators from two subextensions: one that is totally and weakly ramified of $p$-power degree and 
another that is totally and tamely ramified.
Finally, in \S \ref{sec:split-weak-ram} we perform
a second glueing step that crucially depends on the fact that $L'/K$ is doubly split.

We now consider the structure of $\mathcal{O}_{L}$ over its associated order $\mathfrak{A}_{L/K}$.
It is well-known that $\mathfrak{A}_{L/K}$ coincides with the group ring $\mathcal{O}_{K}[G]$ precisely when $L/K$ is tamely ramified.
Using the theory of Lubin-Tate extensions, Byott \cite[Th.\ 5]{MR1702149} showed that if 
$L/K$ is an abelian extension of $p$-adic fields that is weakly and wildly ramified, then
$\mathcal{O}_{L}$ is free over $\mathfrak{A}_{L/K}$ and, moreover,
$\mathfrak{A}_{L/K} =  \mathcal{O}_{K}[G][\pi_{K}^{-1}\Tr_{G_{0}}]$ where $\pi_{K}$ is any uniformizer of $K$ and 
$\Tr_{G_{0}} = \sum_{\tau \in G_{0}} \tau$.
Furthermore, by following the proof one can construct an explicit generator.
Byott also remarked \cite[\S 1]{MR1702149} that if $L/K$ is totally, weakly and wildly ramified 
(but not necessarily abelian) then it is straightforward to deduce the analogous statement from 
the fact that $\mathfrak{P}_{L}$ is free over $\mathcal{O}_{K}[G]$ in this case,
though one does not obtain an explicit generator in this way.
The following theorem generalises these results by having weaker hypotheses
and gives an explicit generator via Theorem \ref{thm:main-ideal-thm}; its elementary proof is given 
in \S \ref{sec:proof-of-assoc-order-result}.

\begin{theorem}\label{thm:main-valring-thm}
Let $L/K$ be a wildly and weakly ramified finite Galois extension of complete local fields with finite residue fields. 
Let $G=\Gal(L/K)$ and let $\pi_{K}$ be any uniformizer of $K$.
Then $\mathfrak{A}_{L/K} = \mathcal{O}_{K}[G][\pi_{K}^{-1}\Tr_{G_{0}}]$
and any free generator of $\mathfrak{P}_{L}$ over $\mathcal{O}_{K}[G]$
(e.g.\ as in Theorem \ref{thm:main-ideal-thm}) is also 
a free generator of $\mathcal{O}_{L}$ over $\mathfrak{A}_{L/K}$.
\end{theorem}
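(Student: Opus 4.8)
The plan is to set $\alpha := \pi_{K}^{-1}\Tr_{G_{0}}$ and $\mathfrak{M} := \mathcal{O}_{K}[G][\alpha]$, and to prove the theorem in the logically convenient order: first that $\mathfrak{M}$ is an $\mathcal{O}_{K}$-order contained in $\mathfrak{A}_{L/K}$, then that $\mathfrak{M}\varepsilon = \mathcal{O}_{L}$ for \emph{any} free generator $\varepsilon$ of $\mathfrak{P}_{L}$ over $\mathcal{O}_{K}[G]$, and finally to deduce both displayed assertions from these two facts together with the normal basis theorem. Throughout I write $F = L^{G_{0}}$ for the inertia field, so that $F/K$ is unramified, $L/F$ is totally (and, since the lower ramification groups satisfy $G_{i}(L/F) = G_{0}\cap G_{i} = G_{i}$ for $i\ge 0$, weakly) ramified with group $G_{0}$, $\overline{F} = \overline{L}$, and $G/G_{0} = \Gal(F/K)$ is identified with $\Gal(\overline{L}/\overline{K})$; I set $e = |G_{0}|$.

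First I would record that $\Tr_{G_{0}}$ is central in $K[G]$ (because $G_{0}\trianglelefteq G$, so conjugation permutes $G_{0}$) and that $\Tr_{G_{0}}^{2} = e\,\Tr_{G_{0}}$, whence $\alpha^{2} = (e\pi_{K}^{-1})\alpha$. Since $L/K$ is wildly ramified, $G_{1}\ne 1$ and $p\mid|G_{1}|\mid e$, so $e\pi_{K}^{-1}\in\mathcal{O}_{K}$; this makes $\mathfrak{M} = \mathcal{O}_{K}[G] + \mathcal{O}_{K}[G]\alpha$ a genuine $\mathcal{O}_{K}$-order. To see $\mathfrak{M}\subseteq\mathfrak{A}_{L/K}$ it suffices to show $\alpha\in\mathfrak{A}_{L/K}$, i.e. $\Tr_{G_{0}}(\mathcal{O}_{L})\subseteq\pi_{K}\mathcal{O}_{L}$. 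Here I would use that, $L/K$ being weakly ramified, the different exponent is $d := v_{L}(\mathfrak{D}_{L/K}) = (e-1) + (|G_{1}|-1)$ and (since $F/K$ is unramified) also equals $v_{L}(\mathfrak{D}_{L/F})$, together with the standard identity $\Tr_{L/F}(\mathfrak{P}_{L}^{m}) = \mathfrak{P}_{F}^{\lfloor (m+d)/e\rfloor}$. Because $2\le|G_{1}|\le e$ one checks that $\lfloor d/e\rfloor = 1$, so $\Tr_{G_{0}}(\mathcal{O}_{L}) = \mathfrak{P}_{F} = \pi_{K}\mathcal{O}_{F}\subseteq\pi_{K}\mathcal{O}_{L}$, as required; the same computation with $m=1$ gives the sharper fact $\Tr_{G_{0}}(\mathfrak{P}_{L}) = \mathfrak{P}_{F}$, which is the engine of the next step.

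The heart of the argument is to show $\mathfrak{M}\varepsilon = \mathcal{O}_{L}$ for an arbitrary free generator $\varepsilon$, and this is the step I expect to be the main obstacle, since a priori one controls $\varepsilon$ only through the single relation $\mathcal{O}_{K}[G]\varepsilon = \mathfrak{P}_{L}$. From $\mathfrak{M}\subseteq\mathfrak{A}_{L/K}$ we already have $\mathfrak{P}_{L} = \mathcal{O}_{K}[G]\varepsilon\subseteq\mathfrak{M}\varepsilon\subseteq\mathcal{O}_{L}$, so it is enough to show that $\mathfrak{M}\varepsilon$ surjects onto the residue quotient $\mathcal{O}_{L}/\mathfrak{P}_{L} = \overline{L}$. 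Since $G_{0}$ acts trivially on $\overline{L}$, the image of $\mathfrak{M}\varepsilon = \mathfrak{P}_{L} + \mathcal{O}_{K}[G]\alpha\varepsilon$ in $\overline{L}$ is the $\overline{K}[G/G_{0}]$-span of the image of $\alpha\varepsilon$. Using centrality of $\Tr_{G_{0}}$ I would rewrite $x\alpha\varepsilon = \pi_{K}^{-1}\Tr_{G_{0}}(x\varepsilon)$ for $x\in\mathcal{O}_{K}[G]$; as $x$ runs over $\mathcal{O}_{K}[G]$, $x\varepsilon$ runs over all of $\mathfrak{P}_{L}$, so $\{x\alpha\varepsilon\} = \pi_{K}^{-1}\Tr_{G_{0}}(\mathfrak{P}_{L}) = \pi_{K}^{-1}\mathfrak{P}_{F} = \mathcal{O}_{F}$ by the surjectivity established above. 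Since $\mathcal{O}_{F}\to\overline{F} = \overline{L}$ is surjective, the image of $\mathfrak{M}\varepsilon$ in $\overline{L}$ is all of $\overline{L}$; combined with $\mathfrak{P}_{L}\subseteq\mathfrak{M}\varepsilon$ this gives $\mathfrak{M}\varepsilon = \mathcal{O}_{L}$. The pleasant point is that this works for every free generator at once, precisely because surjectivity of the partial trace on $\mathfrak{P}_{L}$ does the work that might otherwise require verifying a normal basis condition on $\alpha\varepsilon$ element by element.

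Finally I would harvest the two displayed claims. Extending $\mathfrak{M}\varepsilon = \mathcal{O}_{L}$ along $\mathcal{O}_{K}\hookrightarrow K$ gives $K[G]\varepsilon = L$; as $\dim_{K} K[G] = |G| = [L:K]$, multiplication by $\varepsilon$ is a $K$-isomorphism $K[G]\to L$, in particular injective. Then $\mathcal{O}_{L} = \mathfrak{M}\varepsilon\subseteq\mathfrak{A}_{L/K}\varepsilon\subseteq\mathcal{O}_{L}$ forces $\mathfrak{A}_{L/K}\varepsilon = \mathcal{O}_{L} = \mathfrak{M}\varepsilon$, and injectivity of multiplication by $\varepsilon$ yields simultaneously $\mathfrak{A}_{L/K} = \mathfrak{M} = \mathcal{O}_{K}[G][\pi_{K}^{-1}\Tr_{G_{0}}]$ and that the map $\mathfrak{A}_{L/K}\to\mathcal{O}_{L}$, $x\mapsto x\varepsilon$, is an isomorphism of $\mathfrak{A}_{L/K}$-modules; that is, $\varepsilon$ is a free generator of $\mathcal{O}_{L}$ over $\mathfrak{A}_{L/K}$.
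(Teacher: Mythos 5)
Your proof is correct, and while its outer skeleton matches the paper's (show $\mathcal{O}_{K}[G][\pi_{K}^{-1}\Tr_{G_{0}}] \subseteq \mathfrak{A}_{L/K}$ via the inertia field $F=L^{G_{0}}$, sandwich $\mathcal{O}_{K}[G][\pi_{K}^{-1}\Tr_{G_{0}}]\cdot\varepsilon \subseteq \mathfrak{A}_{L/K}\cdot\varepsilon \subseteq \mathcal{O}_{L}$, then collapse the containments and conclude via injectivity of $x \mapsto x\varepsilon$ on $K[G]$), the engine you use for the crucial middle step is genuinely different. The paper proves $\mathcal{O}_{K}[G][\pi_{K}^{-1}\Tr_{G_{0}}]\cdot\varepsilon = \mathcal{O}_{L}$ by a counting argument with generalised module indices: using the spanning set $G \cup \{\pi_{K}^{-1}s\Tr_{G_{0}}\}_{s}$ it computes $[\mathcal{O}_{K}[G][\pi_{K}^{-1}\Tr_{G_{0}}]:\mathcal{O}_{K}[G]]_{\mathcal{O}_{K}} = \mathfrak{P}_{K}^{[G:G_{0}]} = [\mathcal{O}_{L}:\mathfrak{P}_{L}]_{\mathcal{O}_{K}}$, and since multiplication by $\varepsilon$ preserves indices, the sandwiched containments are forced to be equalities; for this only the containment $\Tr_{G_{0}}(\mathcal{O}_{L}) \subseteq \mathfrak{P}_{F}$ (cited as a standard consequence of wildness) is needed, so that weak ramification enters the paper's argument only through the existence of $\varepsilon$ (Theorem \ref{thm:main-ideal-thm}). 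You instead exploit the exact value of the trace ideal: weak ramification pins down the different exponent $v_{L}(\mathfrak{D}_{L/F}) = (|G_{0}|-1)+(|G_{1}|-1)$, so the same trace formula that the paper uses to derive \eqref{eq:trace-formula} gives the equality $\Tr_{G_{0}}(\mathfrak{P}_{L}) = \mathfrak{P}_{F}$; combined with centrality of $\Tr_{G_{0}}$ and the hypothesis $\mathcal{O}_{K}[G]\cdot\varepsilon = \mathfrak{P}_{L}$, this yields $\mathcal{O}_{K}[G]\cdot(\pi_{K}^{-1}\Tr_{G_{0}}\varepsilon) = \pi_{K}^{-1}\Tr_{G_{0}}(\mathfrak{P}_{L}) = \mathcal{O}_{F}$, whence $\mathcal{O}_{K}[G][\pi_{K}^{-1}\Tr_{G_{0}}]\cdot\varepsilon \supseteq \mathfrak{P}_{L}+\mathcal{O}_{F} = \mathcal{O}_{L}$ because $\overline{F}=\overline{L}$. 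Each route has its merits: the paper's index computation is insensitive to the precise trace ideal and so quarantines all use of weak ramification inside Theorem \ref{thm:main-ideal-thm}, whereas yours dispenses with generalised module indices altogether in favour of an elementary residue-field surjectivity argument, makes visible exactly where weak ramification is needed (if $G_{2} \neq 1$ the different exponent grows, $\Tr_{G_{0}}(\mathfrak{P}_{L})$ lands in a higher power of $\mathfrak{P}_{F}$, and the step fails -- as it must, since the theorem is then false), and records the sharper intermediate fact that $\pi_{K}^{-1}\Tr_{G_{0}}$ maps $\mathfrak{P}_{L}$ onto $\mathcal{O}_{F}$ for every free generator at once.
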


\subsection{Acknowledgements}
It is a pleasure to thank Alex Bartel, Nigel Byott, Griff Elder, Cornelius Greither, Derek Holt, Bernhard K\"ock and Russ Woodroofe
for helpful discussions and correspondence. 
The author is also grateful to the referee for several useful comments and suggestions.

\subsection{Conventions and Notation}
All modules are assumed to be left modules.
However, if $L/K$ is a Galois extension fields and $H \leq \Gal(L/K)$ then we let $L^{H}$ denote the subfield of $L$ fixed by $H$.
For $n \in \N$, we let $\zeta_{n}$ denote a primitive $n$th root of unity.
By a `complete local field' we mean a field that is complete with respect to a non-trivial discrete valuation; for such a field we fix the following notation:

\medskip

\begin{tabular}{ll}
$\mathcal{O}_{K}$ & the ring of integers of $K$ \\
$\mathfrak{P}_{K}$ & the maximal ideal of $\mathcal{O}_{K}$ \\
$\overline{K}$ & the residue field $\mathcal{O}_{K}/\mathfrak{P}_{K}$ \\
$\pi_{K}$ & a uniformizer of $K$ \\
$v_{K}$ & the normalised valuation $v_{K}:K^{\times} \longrightarrow \Z$
\end{tabular}

\medskip

\noindent
We make no assumptions on the residue field $\overline{K}$ except where stated otherwise.

\section{Preliminaries}\label{sec:prelims}

\subsection{A lemma on normal integral bases of ideals}
We shall make frequent use of the following easy lemma.

\begin{lemma}\label{lem:generalities-NIBs}
Let $L/K$ be a finite Galois extension of complete local fields with Galois group $G$.
Let $\mathfrak{I}$ be a non-zero fractional ideal of $\mathcal{O}_{L}$ and let 
$\overline{\mathfrak{I}} = \mathfrak{I}/\mathfrak{P}_{K}\mathfrak{I}$. 
Let $\overline{\delta} \in \overline{\mathfrak{I}}$ and let $\delta$ be any lift to $\mathfrak{I}$.
Then the following are equivalent:
\begin{enumerate}[label=\rm{(\roman*)}]
\item $\overline{\mathfrak{I}} = \overline{K}[G] \cdot \overline{\delta}$,
\item $\mathfrak{I} = \mathcal{O}_{K}[G] \cdot \delta$,
\item $\overline{\delta}$ is a free generator of $\overline{\mathfrak{I}}$ over $\overline{K}[G]$,
\item $\delta$ is a free generator of $\mathfrak{I}$ over $\mathcal{O}_{K}[G]$.
\end{enumerate}
\end{lemma}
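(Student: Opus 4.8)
The plan is to package all four conditions in terms of a single $\mathcal{O}_{K}[G]$-module homomorphism and then read them off via reduction modulo $\mathfrak{P}_{K}$ together with a dimension count. Concretely, I would consider the $\mathcal{O}_{K}[G]$-linear map $\phi\colon \mathcal{O}_{K}[G] \to \mathfrak{I}$, $x \mapsto x\delta$, and its reduction $\overline{\phi}\colon \overline{K}[G] \to \overline{\mathfrak{I}}$, $\overline{x} \mapsto \overline{x}\,\overline{\delta}$, which is well-defined since $\phi(\mathfrak{P}_{K}\mathcal{O}_{K}[G]) = \mathfrak{P}_{K}(\mathcal{O}_{K}[G]\delta) \subseteq \mathfrak{P}_{K}\mathfrak{I}$. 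Under this dictionary, statement (ii) says $\phi$ is surjective, (iv) says $\phi$ is bijective, (i) says $\overline{\phi}$ is surjective, and (iii) says $\overline{\phi}$ is bijective. The first thing to record is the numerics: being a non-zero fractional ideal, $\mathfrak{I}$ is a finitely generated torsion-free, hence free, $\mathcal{O}_{K}$-module, and tensoring with $K$ identifies its rank with $\dim_{K} L = [L:K] = |G|$; therefore $\dim_{\overline{K}}\overline{\mathfrak{I}} = |G| = \dim_{\overline{K}}\overline{K}[G]$.

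With this in hand I would establish the equivalences through a short chain. The implications (iv)$\Rightarrow$(ii), (iii)$\Rightarrow$(i) and (ii)$\Rightarrow$(i) are immediate, since a bijection is a surjection and a surjection stays surjective after reduction modulo $\mathfrak{P}_{K}\mathfrak{I}$. For (i)$\Leftrightarrow$(iii), the map $\overline{\phi}$ is $\overline{K}$-linear between two spaces of the same finite dimension $|G|$, so it is surjective if and only if it is bijective. For (i)$\Rightarrow$(ii) I would invoke Nakayama's lemma: surjectivity of $\overline{\phi}$ means $\mathfrak{I} = \phi(\mathcal{O}_{K}[G]) + \mathfrak{P}_{K}\mathfrak{I}$, and since $\mathfrak{I}$ is finitely generated over the local ring $\mathcal{O}_{K}$ this forces $\mathfrak{I} = \phi(\mathcal{O}_{K}[G])$, i.e.\ $\phi$ is surjective. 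Together with the trivial (ii)$\Rightarrow$(i) this gives (i)$\Leftrightarrow$(ii).

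It remains to prove (ii)$\Rightarrow$(iv), which I expect to be the only step requiring real (if still standard) input: upgrading surjectivity of $\phi$ to injectivity. Here I would exploit that both $\mathcal{O}_{K}[G]$ and $\mathfrak{I}$ are free $\mathcal{O}_{K}$-modules of the same rank $|G|$. Tensoring the surjection $\phi$ with the fraction field $K$ yields a surjection $K[G] \to K \otimes_{\mathcal{O}_{K}} \mathfrak{I} \cong L$ of $K$-vector spaces each of dimension $|G|$, which is therefore bijective; since $\mathcal{O}_{K}[G]$ is torsion-free it embeds into $K[G]$, and $\ker\phi$ lands in the kernel of this bijection, forcing $\ker\phi = 0$. (Equivalently, the surjection of free $\mathcal{O}_{K}$-modules splits, exhibiting $\ker\phi$ as a projective, hence free, summand of rank $|G| - |G| = 0$.) Combining (i)$\Leftrightarrow$(iii), (i)$\Leftrightarrow$(ii) and (ii)$\Leftrightarrow$(iv) then yields the equivalence of all four statements.
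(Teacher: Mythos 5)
Your proof is correct and follows essentially the same route as the paper's: Nakayama's lemma for (i)$\Rightarrow$(ii), a rank/dimension count for (ii)$\Rightarrow$(iv) and (i)$\Leftrightarrow$(iii), with the remaining implications being immediate. The only cosmetic difference is that you apply Nakayama over the commutative local ring $\mathcal{O}_{K}$ (using that $\mathfrak{I}$ is finitely generated as an $\mathcal{O}_{K}$-module), whereas the paper applies it over $\mathcal{O}_{K}[G]$ after noting that $\mathfrak{P}_{K}\cdot\mathcal{O}_{K}[G]$ lies in the Jacobson radical of the group ring.
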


\begin{proof}
That (i) implies (ii) is a straightforward application of Nakayama's Lemma
once one notes that $\mathfrak{P}_{K} \cdot \mathcal{O}_{K}[G]$ is a two-sided ideal contained in the Jacobson
radical of $\mathcal{O}_{K}[G]$ (see e.g.\ \cite[Prop.\ (5.22)(i)]{MR632548}); the converse is clear.
Suppose (ii) holds; then the map $\mathcal{O}_{K}[G] \longrightarrow \mathfrak{I}$ given by $x \mapsto x \cdot \delta$ is surjective
and a rank argument gives injectivity, so (iv) holds; the converse is clear.
The proof of the equivalence of (i) and (iii) is similar.
\end{proof}

\subsection{Unramified extensions}
The following result is well-known (see e.g.\ \cite[(II)]{MR825142});
we repeat the short proof for the convenience of the reader.

\begin{prop}\label{prop:unram-NIB}
Let $L/K$ be an unramified finite Galois extension of complete local fields with Galois group $G$.
Then the residue field extension $\overline{L}/\overline{K}$ is Galois and so by the Normal Basis
Theorem there exists a free generator $\overline{\beta}$ of $\overline{L}$ over $\overline{K}[G]$.
Moreover, any lift $\beta$ of such a $\overline{\beta}$ to $\mathcal{O}_{L}$ is a free generator of $\mathcal{O}_{L}$ over $\mathcal{O}_{K}[G]$.
\end{prop}

\begin{proof}
By definition of unramified, $\overline{L}/\overline{K}$ is separable (see \cite[Ch.\ I, \S 4]{MR554237}).
Thus the hypotheses imply that $\overline{L}/\overline{K}$ is in fact Galois and that
$G$ identifies with $\Gal(\overline{L}/\overline{K})$ (see  \cite[Ch.\ III, \S 5]{MR554237}).
For the last claim, apply Lemma \ref{lem:generalities-NIBs} with $\mathfrak{I}=\mathcal{O}_{L}$
noting that $\mathfrak{P}_{K}\mathcal{O}_{L} = \mathfrak{P}_{L}$.
\end{proof}

\begin{remark}\label{rmk:finite-field-normal-basis-element}
Let $L/K$ be an unramified finite extension of complete local fields with finite residue fields; then $L/K$ is necessarily Galois (in fact cyclic).
In this case, the construction of a normal basis element $\overline{\beta}$ for $\overline{L}/\overline{K}$
is a significant problem in its own right and there is a large amount of literature on the subject;
see e.g.\ \cite{MR942137}. 
We note that if $[L:K]$ is a power of $p:=\operatorname{char} \overline{K}$ then
Proposition \ref{prop:non-zero-trace} below can be applied to show that the normal basis elements of
$\overline{L}/\overline{K}$ are precisely those elements $\overline{\beta}$ 
such that $\Tr_{\overline{L}/\overline{K}}(\overline{\beta}) \neq 0$.
\end{remark}

\subsection{Totally and tamely ramified extensions}
The following lemma is well-known (see e.g.\ \cite[Ch.\ 16]{MR1885791}).

\begin{lemma}\label{lem:total-tame-Galois-Kummer}
Let $L/K$ be a totally and tamely ramified finite extension of complete local fields with finite residue fields.
Let $e=[L:K]$.
\begin{enumerate}[label=\rm{(\roman*)}]
\item  There exist uniformizers $\pi_{L}$ and $\pi_{K}$ in $L$ and $K$
respectively such that $\pi_{L}^{e}=\pi_{K}$.
\item Assume further that $L/K$ is Galois. Then $K$ contains the $e$th roots of unity and 
$L/K$ is a cyclic Kummer extension with Kummer generator $\pi_{L}$.
\end{enumerate}
\end{lemma}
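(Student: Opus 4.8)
The plan is to treat the two parts in turn, deriving part (ii) from part (i) together with a Hensel's lemma argument.

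For part (i), I would start from arbitrary uniformizers $\pi_{L}$ of $L$ and $\pi_{K}$ of $K$ and then correct $\pi_{L}$ by a suitable unit. Since $L/K$ is totally ramified of degree $e$, the ramification index is $e$ and the residue extension $\overline{L}/\overline{K}$ is trivial; in particular $v_{L}(\pi_{K})=e=v_{L}(\pi_{L}^{e})$, so $u:=\pi_{L}^{e}/\pi_{K}$ lies in $\mathcal{O}_{L}^{\times}$. Because $\overline{L}=\overline{K}$, I can choose $a\in\mathcal{O}_{K}^{\times}$ whose image in $\overline{K}^{\times}$ equals that of $u$, so that $u/a\equiv 1\pmod{\mathfrak{P}_{L}}$. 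The key input is that $L/K$ is tamely ramified, i.e.\ $p\nmid e$ where $p=\operatorname{char}\overline{K}$: this makes the polynomial $X^{e}-(u/a)$ satisfy the hypotheses of Hensel's Lemma at $X=1$ (its derivative $eX^{e-1}$ is a unit there), yielding $w\in\mathcal{O}_{L}^{\times}$ with $w^{e}=u/a$. Replacing $\pi_{L}$ by $\pi_{L}/w$ and $\pi_{K}$ by $a\pi_{K}$ then gives uniformizers with $\pi_{L}^{e}=\pi_{K}$, as a short computation confirms.

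For part (ii), I would first check that $\pi_{L}$ generates $L$ over $K$ and that $X^{e}-\pi_{K}$ is its minimal polynomial. A valuation count shows that $K(\pi_{L})/K$ already has ramification index $e$, hence degree at least $e$; since $\pi_{L}$ satisfies the degree-$e$ polynomial $X^{e}-\pi_{K}$, we obtain $L=K(\pi_{L})$ and irreducibility of $X^{e}-\pi_{K}$. This polynomial is separable because $p\nmid e$, so in the normal extension $L$ its $e$ distinct roots all lie in $L$, and their pairwise ratios exhibit all $e$-th roots of unity $\mu_{e}$ inside $L$.

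The heart of the matter is then to promote $\mu_{e}\subseteq L$ to $\mu_{e}\subseteq K$. For this I would use that a totally ramified Galois extension has $G=\Gal(L/K)$ acting trivially on the residue field $\overline{L}=\overline{K}$, combined with the fact that reduction modulo $\mathfrak{P}_{L}$ is injective on $\mu_{e}$ when $p\nmid e$ (a consequence of $\prod_{i=1}^{d-1}(1-\zeta^{i})=d$ being a unit for a primitive $d$-th root of unity $\zeta$ with $d\mid e$). Given $\zeta\in\mu_{e}$ and $\sigma\in G$, the element $\sigma(\zeta)$ again lies in $\mu_{e}$ and has the same image as $\zeta$ in $\overline{K}$, so injectivity forces $\sigma(\zeta)=\zeta$; hence $\mu_{e}\subseteq L^{G}=K$. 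With $\mu_{e}\subseteq K$ and $L=K(\pi_{L})$ an $e$-th root extension, Kummer theory identifies $L/K$ as abelian with $\Gal(L/K)\hookrightarrow\mu_{e}$ via $\sigma\mapsto\sigma(\pi_{L})/\pi_{L}$; comparing orders shows this is an isomorphism, so $L/K$ is cyclic with Kummer generator $\pi_{L}$. I expect this last injectivity-of-reduction step, and the passage from $\mu_{e}\subseteq L$ to $\mu_{e}\subseteq K$, to be the main obstacle, since everything else reduces to a direct valuation or Hensel computation.
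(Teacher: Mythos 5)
Your proof is correct, but note that the paper itself gives no proof of this lemma at all: it is stated as ``well-known'' with a citation to a standard reference, so there is nothing internal to compare against step by step. What you have supplied is essentially the standard textbook argument, and every step checks out: in part (i) the Hensel correction works because $f(X)=X^{e}-(u/a)$ has $f(1)\equiv 0 \bmod \mathfrak{P}_{L}$ while $f'(1)=e$ is a unit by tameness; in part (ii) the valuation count forcing $[K(\pi_{L}):K]\geq e$ and hence $L=K(\pi_{L})$ with $X^{e}-\pi_{K}$ irreducible is sound, and your descent of $\mu_{e}$ from $L$ to $K$ is the genuinely nontrivial point, handled correctly: total ramification with finite (hence perfect) residue fields gives $\overline{L}=\overline{K}$, so every $\sigma\in G$ acts trivially on $\overline{L}$, while $p\nmid e$ makes reduction injective on $\mu_{e}$ (via $\prod_{i=1}^{d-1}(1-\zeta^{i})=d$ being a unit), whence $\sigma(\zeta)=\zeta$ and $\mu_{e}\subseteq L^{G}=K$. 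The concluding Kummer-theoretic step is also fine, since the character $\sigma\mapsto\sigma(\pi_{L})/\pi_{L}$ is a homomorphism precisely because $\mu_{e}\subseteq K$ has already been established, and injectivity plus $|G|=e=|\mu_{e}|$ gives cyclicity. It is worth observing that your unit computation $\prod_{i=1}^{d-1}(1-\zeta^{i})=d$ is the same device the paper itself uses later, in the proof of Proposition \ref{prop:tot-tame-gen}, to show the Vandermonde determinant $\det(B)$ is a unit; so your argument fits naturally with the paper's toolkit and makes the lemma self-contained where the paper chose to outsource it.
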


The following proposition is a slight generalisation of \cite[(I)]{MR825142};  we give essentially the same proof for the convenience
of the reader. Also see \cite[\S 7.1]{MR1128708} and \cite[\S 3]{MR1373957}.

\begin{prop}\label{prop:tot-tame-gen}
Let $L/K$ be a totally and tamely ramified finite Galois extension of complete local fields with finite residue fields.
Let $e=[L:K]$ and let $G=\Gal(L/K)$.
Let $\pi_{L}$ be as in Lemma \ref{lem:total-tame-Galois-Kummer} (i) and let $\alpha \in \mathcal{O}_{L}$. 
Then there exist unique $u_{0}, \ldots, u_{e-1} \in \mathcal{O}_{K}$ such that
\[
\alpha = u_{0} + u_{1}\pi_{L} + \cdots + u_{e-1}\pi_{L}^{e-1}.
\]
Let $n \in \Z$. Then $\pi_{L}^{n}\alpha$ is a free generator of $\mathfrak{P}_{L}^{n}$ over $\mathcal{O}_{K}[G]$
if and only if $u_{i} \in \mathcal{O}_{K}^{\times}$ for $i=0,\ldots,e-1$; in particular this is the case if we take 
$\alpha= 1 + \pi_{L} + \cdots + \pi_{L}^{e-1}$.
\end{prop}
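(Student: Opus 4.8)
The plan is to pass to the residue field via Lemma \ref{lem:generalities-NIBs} and then exploit the fact that, by tameness, the group algebra $\overline{K}[G]$ is split semisimple with a basis of eigenvectors supplied directly by the powers of $\pi_L$.

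First I would dispose of the existence and uniqueness of the $u_{i}$: since $L/K$ is totally ramified and $\pi_{L}$ is a uniformizer, $\mathcal{O}_{L}$ is free over $\mathcal{O}_{K}$ with basis $1, \pi_{L}, \ldots, \pi_{L}^{e-1}$, which gives the displayed expansion and its uniqueness. Next I would reduce the main equivalence to a statement over $\overline{K}$. Applying Lemma \ref{lem:generalities-NIBs} with $\mathfrak{I} = \mathfrak{P}_{L}^{n}$, the element $\pi_{L}^{n}\alpha$ is a free generator of $\mathfrak{P}_{L}^{n}$ over $\mathcal{O}_{K}[G]$ if and only if its image is a free generator of $\overline{\mathfrak{P}_{L}^{n}} = \mathfrak{P}_{L}^{n}/\mathfrak{P}_{K}\mathfrak{P}_{L}^{n}$ over $\overline{K}[G]$. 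Since total ramification gives $\mathfrak{P}_{K}\mathcal{O}_{L} = \mathfrak{P}_{L}^{e}$, we have $\overline{\mathfrak{P}_{L}^{n}} = \mathfrak{P}_{L}^{n}/\mathfrak{P}_{L}^{n+e}$, an $e$-dimensional $\overline{K}$-vector space with basis the images of $\pi_{L}^{n}, \pi_{L}^{n+1}, \ldots, \pi_{L}^{n+e-1}$.

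The key step is to make this $\overline{K}[G]$-module structure explicit. By Lemma \ref{lem:total-tame-Galois-Kummer}(ii), $G$ is cyclic, say $G = \langle \sigma \rangle$, and the injective homomorphism $\tau \mapsto \tau(\pi_{L})/\pi_{L}$ identifies $G$ with the group $\mu_{e}$ of $e$th roots of unity in $K$, so $\sigma(\pi_{L}) = \zeta \pi_{L}$ for some primitive $e$th root of unity $\zeta \in K$. Because $L/K$ is tame we have $p \nmid e$, so $\zeta \in \mathcal{O}_{K}^{\times}$ reduces to a primitive $e$th root of unity $\overline{\zeta} \in \overline{K}$; hence $\overline{K}[G] \cong \overline{K}[\sigma]/(\sigma^{e}-1)$ splits as a product of $e$ copies of $\overline{K}$, one for each eigenvalue $\overline{\zeta}^{\,i}$. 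In the chosen basis $\sigma$ acts diagonally, sending the image of $\pi_{L}^{n+j}$ to $\overline{\zeta}^{\,n+j}$ times itself; as $j$ runs from $0$ to $e-1$ these eigenvalues run through all $e$th roots of unity and are pairwise distinct, so each basis vector spans a distinct simple component of $\overline{K}[G]$.

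Finally I would read off the criterion. Writing the image of $\pi_{L}^{n}\alpha$ as $\sum_{j=0}^{e-1} \overline{u_{j}}\,\overline{\pi_{L}^{n+j}}$, its component in the $j$th simple factor is $\overline{u_{j}}\,\overline{\pi_{L}^{n+j}}$. Since a module over the split semisimple algebra $\prod_{j} \overline{K}$ is generated by an element precisely when all of its components are nonzero, this image generates $\overline{\mathfrak{P}_{L}^{n}}$ over $\overline{K}[G]$ if and only if $\overline{u_{j}} \neq 0$ for every $j$, that is, $u_{j} \in \mathcal{O}_{K}^{\times}$ for every $j$; combined with the reduction above this yields the stated equivalence, and the choice $\alpha = 1 + \pi_{L} + \cdots + \pi_{L}^{e-1}$ (all $u_{j}=1$) gives the `in particular' claim. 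I do not expect a serious obstacle here: the only point needing care is confirming that tameness makes $\overline{K}[G]$ split and that the power basis simultaneously diagonalises $\sigma$, which together reduce the problem to the transparent module theory of a finite product of fields.
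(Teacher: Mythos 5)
Your proof is correct, and it takes a genuinely different route from the paper's. The paper works integrally and reduces to $n=0$ first (using $\sigma^{j}(\pi_{L}^{n})=\zeta^{jn}\pi_{L}^{n}$), then writes the change-of-basis identity $(\alpha,\sigma(\alpha),\ldots,\sigma^{e-1}(\alpha))=(1,\pi_{L},\ldots,\pi_{L}^{e-1})A$ with $A=(u_{j}\zeta^{ij})$, and reads the freeness criterion off the determinant: $\det(A)=(\prod_{k}u_{k})\det(B)$ with $B$ a Vandermonde matrix, and tameness enters via the observation that each $1-\zeta^{k}$ divides $f(1)=e$, hence is a unit, so $\det(B)\in\mathcal{O}_{K}^{\times}$. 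You instead reduce modulo $\mathfrak{P}_{K}$ at the outset via Lemma \ref{lem:generalities-NIBs} and exploit the structure of $\overline{K}[G]$: tameness gives $p\nmid e$, so $\overline{K}[G]$ is split semisimple with the images of $\pi_{L}^{n},\ldots,\pi_{L}^{n+e-1}$ forming an eigenbasis realising each character of $G$ exactly once, and generation is equivalent to non-vanishing of every component. The two arguments hinge on the same underlying fact---the $e$th roots of unity remain pairwise distinct in $\overline{K}$ (your ``$\overline{\zeta}$ is primitive'' claim is exactly equivalent to the paper's $\det(B)\in\mathcal{O}_{K}^{\times}$, and you should note it deserves the same one-line justification the paper gives, e.g.\ via $\prod_{k=1}^{e-1}(1-\zeta^{k})=e\in\mathcal{O}_{K}^{\times}$, rather than bare citation of standardness)---but the packaging differs. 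The paper's computation is more elementary and self-contained, fitting its stated aim of elementary proofs; yours is more conceptual, treats all $n$ uniformly without the reduction to $n=0$, and makes transparent that $\overline{\mathfrak{P}_{L}^{n}}\cong\overline{K}[G]$ as a module with generators characterised by non-vanishing character components, which clarifies why the criterion is exactly ``all $u_{i}$ are units.''
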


\begin{proof}
Since $L/K$ is totally ramified, we have $\mathcal{O}_{L}=\mathcal{O}_{K}[\pi_{L}]$
(see \cite[I, \S 6, Prop.\ 18]{MR554237}); this gives the first claim.
By Lemma \ref{lem:total-tame-Galois-Kummer} (ii), $L/K$ is a cyclic Kummer extension of degree $e$
with Kummer generator $\pi_{L}$. Let $\sigma$ be any generator of $G$. 
Then there exists a primitive $e$th root of unity $\zeta=\zeta_{e}$ such that $\sigma(\pi_{L})=\zeta\pi_{L}$.
Note that since $\sigma^{j}(\pi_{L}^{n}) = \zeta^{jn}\pi_{L}^{n}$ and $\zeta^{jn} \in \mathcal{O}_{K}^{\times}$
for all $j$, we are reduced to considering the case $n=0$.

A straightforward computation shows that with $A:=(u_{j} \zeta^{ij})_{0 \leq i,j \leq e-1}$ we have
\begin{equation}\label{eq:matrix-eq}
(\alpha, \sigma(\alpha), \ldots, \sigma^{e-1}(\alpha)) = (1, \pi_{L}, \ldots, \pi_{L}^{e-1})A.
\end{equation}
Since $A$ has coefficients in $\mathcal{O}_{K}$ and the vectors in \eqref{eq:matrix-eq}
give $\mathcal{O}_{K}$-bases for $\mathcal{O}_{K}[G] \cdot \alpha$ and $\mathcal{O}_{L}$, respectively,
we have that $\alpha$ is a free generator of $\mathcal{O}_{L}$ over $\mathcal{O}_{K}[G]$
if and only if $\det(A) \in \mathcal{O}_{K}^{\times}$.
Now setting $B:=(\zeta^{ij})_{0 \leq i,j \leq e-1}$ we have
\begin{equation}\label{eqn:detA}
\textstyle{\det(A) = (\prod_{k=0}^{e-1} u_{k}) \det(B)}.
\end{equation}
However, $B$ is a Vandermonde matrix, so for some $m \in \N$ we have 
\begin{equation}\label{eqn:detB}
\textstyle{\det(B) = \prod_{0 \leq i < j \leq e-1}(\zeta^{j}-\zeta^{i}) = \zeta^{m} \prod_{0 \leq i < j \leq e-1}(\zeta^{j-i}-1)}.
\end{equation}
Now consider
\[
\textstyle{f(X) := X^{e-1}+X^{e-2}+ \cdots +X + 1 = \prod_{k=1}^{e-1} (X-\zeta^{k}).}
\]
For $1 \leq k \leq e-1$, we see that $(1-\zeta^{k})$ divides $f(1)=e$.
However, since $L/K$ is tamely ramified, $e$ is relatively prime to the residue characteristic of $K$.
Hence for $1 \leq k \leq e-1$, we have $(1-\zeta^{k}) \in \mathcal{O}_{K}^{\times}$; 
thus by \eqref{eqn:detB} $\det(B) \in \mathcal{O}_{K}^{\times}$, and so by \eqref{eqn:detA}
$\det(A) \in \mathcal{O}_{K}^{\times}$ if and only if
 $u_{i} \in \mathcal{O}_{K}^{\times}$ for $i=0,\ldots,e-1$.
\end{proof}

\section{A splitting lemma for local fields}\label{sec:splitting-lemma}

\begin{definition}\label{def:splittings}
Let $L/K$ be a finite Galois extension of complete local fields with finite residue fields. 
Let $G=\Gal(L/K)$, let $I=G_{0}$ be its inertia subgroup and let $W=G_{1}$ be its wild inertia subgroup.
We say that $L/K$ is 
\begin{enumerate}[label=\rm{(\roman*)}]
\item \emph{split with respect to inertia} if $G$ decomposes as a semi-direct product $G=I \rtimes U$ for some (necessarily cyclic) 
subgroup $U$ of $G$ (so $L/L^{U}$ is unramified); 
\item \emph{split with respect to wild inertia} if $G$ decomposes as a semi-direct product $G=W \rtimes T$ for some subgroup 
$T$ of $G$ (so $L/L^{T}$ is tamely ramified);
\item \emph{doubly split} if there exists a (necessarily cyclic) subgroup $C$ of $I$
and both (i) and (ii) hold with choices of $U$ and $T$ such that there are semi-direct product decompositions 
$I=W \rtimes C$ and $T=C \rtimes U$, and so we have
\[
G=W \rtimes T= W \rtimes (C  \rtimes U) = (W \rtimes C) \rtimes U = I \rtimes U.
\]
\end{enumerate}
\end{definition}

\begin{remark}\label{rmk:totally-ram-implies-doubly-split}
If $L/K$ is totally ramified then the Schur-Zassenhaus Theorem \cite[6.2.1]{MR2014408} shows
that $L/K$ is split with respect to wild inertia and thus trivially is also doubly split.
\end{remark}

\begin{lemma}\label{lem:splitting-lemma}
Let $L/K$ be a finite Galois extension of complete local fields with finite residue fields.
Let $d$ be any positive integer divisible by the exponent of $\Gal(L/K)$ (e.g.\ take $d=[L:K]$).
Let $K'/K$ be the unique unramified extension of degree $d$ and let $L' = LK'$. 
Then $L'/K$ is Galois, $\Gal(L'/K')$ is the inertia subgroup of $\Gal(L'/K)$,
and $L'/K$ is doubly split.
\end{lemma}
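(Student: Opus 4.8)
The plan is to dispatch the two easy assertions first and then concentrate on double splitting. Since $K'/K$ is unramified with finite residue field it is cyclic, hence Galois, so the compositum $L'=LK'$ is Galois over $K$. For the inertia statement I would show that $K'$ is the maximal unramified subextension of $L'/K$. Put $e=e(L/K)$ and $f=f(L/K)$, so that $[L:K]=ef\mid d$; in particular $f\mid d$, whence the (degree-$f$) maximal unramified subextension $L_0$ of $L/K$ lies in $K'$ and $L\cap K'=L_0$. Then $[L':K]=[LK':K'][K':K]=[L:L_0]\,d=ed$, while $e(L'/K)\ge e$ and $f(L'/K)\ge d$ force $e(L'/K)=e$, $f(L'/K)=d$, and $\overline{L'}=\overline{K'}$. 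Thus $L'/K'$ is totally ramified of degree $e$, $L'/L$ is unramified, $K'$ is the maximal unramified subextension of $L'/K$, and so $\Gal(L'/K')$ is the inertia subgroup $I'$ of $G':=\Gal(L'/K)$.

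For the double splitting I would exploit a product description of $G'$. Because $L/K$ and $K'/K$ are Galois with $L\cap K'=L_0$, restriction embeds $G'$ as the fibre product $\{(\sigma,\tau)\in\Gal(L/K)\times\Gal(K'/K) : \sigma|_{L_0}=\tau|_{L_0}\}$. Under this identification $I'$ becomes the inertia subgroup $I_G:=\Gal(L/L_0)$ of $\Gal(L/K)$ (via $\sigma\mapsto(\sigma,1)$), and the wild inertia $W'$ becomes the wild inertia $W_G:=(\Gal(L/K))_1$, since forming the compositum with the unramified extension $K'$ preserves the lower-numbering ramification groups of the totally ramified part. In this way the entire group-theoretic problem is pushed down into the fixed, known group $\Gal(L/K)$.

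Now I would build the three complements. Writing $e=e_t\,p^{s}$ with $p$ the residue characteristic and $e_t$ prime to $p$, the wild inertia $W_G$ is the normal Sylow $p$-subgroup of $I_G$, so Schur--Zassenhaus yields a cyclic complement $C_G$ of order $e_t$ with $I_G=W_G\rtimes C_G$; set $C':=C_G\times 1$. The heart of the matter is to produce a cyclic $U'$ of order $d$ that simultaneously complements $I'$ and normalizes $C'$. For this I would use a Frattini argument: $C_G$ is a Hall $p'$-subgroup of the normal subgroup $I_G\trianglelefteq\Gal(L/K)$, so $\Gal(L/K)=I_G\,N_{\Gal(L/K)}(C_G)$, and hence the restriction $N_{\Gal(L/K)}(C_G)\to\Gal(L_0/K)$ is surjective. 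Choosing $\sigma\in N_{\Gal(L/K)}(C_G)$ that restricts to the Frobenius of $L_0/K$, and letting $\phi$ be the Frobenius generator of $\Gal(K'/K)$, the element $(\sigma,\phi)$ lies in $G'$, normalizes $C'$, and maps onto a generator of $G'/I'\cong\Gal(K'/K)$. The crucial point, and the one place where the full strength of $[L:K]\mid d$ (rather than merely $f\mid d$) is used, is that $(\sigma,\phi)$ has order exactly $d$: indeed $\mathrm{ord}(\sigma)$ divides $[L:K]\mid d$ while $\mathrm{ord}(\phi)=d$, so the least common multiple of the two is $d$. I expect this combined requirement, namely an unramified complement that is at once of the correct order and compatible with the tame complement, to be the main obstacle, and it is resolved precisely by marrying the Frattini argument to this order computation (the group-theoretic input of the final paragraph).

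Finally I would set $U':=\langle(\sigma,\phi)\rangle$ and $T':=C'U'$. Since $C'\le I'$ and $U'\cap I'=1$ we get $C'\cap U'=1$, and as $U'$ normalizes $C'$ we have $T'=C'\rtimes U'$. A short check using the Dedekind modular law then gives $G'=W'\rtimes T'=I'\rtimes U'$ together with $I'=W'\rtimes C'$ and $T'=C'\rtimes U'$, with $C'$ and $U'$ cyclic as required. This is exactly the doubly split structure of Definition~\ref{def:splittings}, completing the proof.
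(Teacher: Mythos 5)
Your proof is correct and follows essentially the same route as the paper: both arguments run on exactly the same three ingredients --- Schur--Zassenhaus to produce the cyclic complement $C$ of wild inertia inside inertia, the Frattini argument to choose a Frobenius lift that normalizes $C$, and the observation that $[L:K]\mid d$ forces that lift to have order exactly $d$, so that it generates the complement $U$. The only difference is bookkeeping: you transport the problem to the fibre product $\Gal(L/K)\times_{\Gal(L_0/K)}\Gal(K'/K)$ and do the group theory inside $\Gal(L/K)$, whereas the paper runs the identical argument directly inside $\Gal(L'/K)$ (which also lets it bypass the base-change compatibility of wild inertia you invoke, since there one may simply define $W$ as the unique Sylow $p$-subgroup of $\Gal(L'/K')$).
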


\begin{proof}
Since $L/K$ and $K'/K$ are both Galois, so is $L'/K$.
By considering ramification degrees, it is straightforward to check that $I:=\Gal(L'/K')$ is
the inertia subgroup of $G:=\Gal(L'/K)$.

We show that $L'/K$ is split with respect to inertia. 
Consider the exact sequence
\begin{equation}\label{eq:split-exact-galois}
1 \longrightarrow I=\Gal(L'/K') \longrightarrow G=\Gal(L'/K) \stackrel{\rho}{\longrightarrow} 
\Gal(K'/K) \longrightarrow 1.
\end{equation}
Let $\sigma \in \Gal(K'/K)$ be the Frobenius element (or indeed any generator of this cyclic group)
and take any $\tau \in \Gal(L'/K)$ with $\rho(\tau)=\sigma$. 
Then $\tau^{d}$ is the identity on both $L$ and $K'$, 
so we have $\tau^{d}=\mathrm{id}_{L'}$.
Therefore $\varphi:\Gal(K'/K) \longrightarrow \Gal(L'/K)$, defined by $\varphi(\sigma)=\tau$,
is a splitting homomorphism for \eqref{eq:split-exact-galois}.
Thus we may take $U=\langle \tau \rangle$.

We now prove that $L'/K$ is in fact doubly split.
Let $p>0$ be the residue characteristic of $K$ and let 
 $W$ (wild inertia) be the unique Sylow $p$-subgroup of $I$.
Since $|I/W|$ is coprime to $p$, by the first claim of Schur-Zassenhaus Theorem \cite[6.2.1]{MR2014408} 
there exists a (cyclic) complement $C$ of $W$ in $I$ (i.e.\ $I=WC$ and $W \cap C = 1$).
Let $N=N_{G}(C)$ be the normaliser of $C$ in $G$.
Since $C$ is soluble, the second claim of the Schur-Zassenhaus Theorem \cite[6.2.1]{MR2014408} says that all complements
of $W$ in $I$ are conjugate (to $C$), and so the Frattini argument \cite[3.1.4]{MR2014408} shows that $G=IN$. 
Hence we can and do assume that $\tau$ defined in the above paragraph in fact belongs to $N$.
Recall that $U =\langle \tau \rangle$ is a complement of $I$ in $G$.
Moreover, $U \leq N=N_{G}(C)$ and so $T:=CU$ is a subgroup of $G$.
Note that $I \cap U = 1$ and $C \leq I$, so $C \cap U=1$.
Thus $U$ is a complement of $C$ in $T$ and we have $|T|=|C|\cdot|U|$.
Now we have 
$G = IU = (WC)U = W(CU) = WT$.
Moreover, $|G|=|W|\cdot|T|$ and so $W \cap T=1$. 
Therefore $T$ is the desired complement of $W$ in $G$.
\end{proof}

\begin{remark}\label{rmk:ideas-for-proofs}
The second paragraph of the proof of Lemma \ref{lem:splitting-lemma} is an adaptation of the proof of \cite[Lem.\ 1]{MR1627831}, 
which shows that $L'/K$ is split with respect to inertia when $L/K$ is abelian.
The author is grateful to both Derek Holt for a helpful discussion that led to the argument used in final paragraph
of the proof of Lemma \ref{lem:splitting-lemma}, and to Russ Woodroofe for pointing out that 
Gasch\"utz's Theorem \cite[3.3.2]{MR2014408} can be used to give an alternative proof of this result in a special case
(see the MathOverflow discussion \cite{156209}).
\end{remark}

\section{The explicit description of a generator}\label{sec:explicit-gen}

\begin{theorem}\label{thm:explicit-gen}
Let $L/K$ be a weakly ramified finite Galois extension of complete local fields with finite residue fields. 
Let $G=\Gal(L/K)$ and let $n \in \Z$ such that $n \equiv 1 \bmod |G_{1}|$.
Suppose that $L/K$ is doubly split in the sense of Definition \ref{def:splittings}
and let $I,W,T,U,C$ have the meanings given therein.
\begin{itemize}
\item Let $p>0$ be the residue characteristic of $K$.
\item Define $r$ by $p^{r}=|G_{1}|=|W|$ and let $c=|C|$.
\item Let $a,b \in \Z$ such that $ap^{r}+bc=1$ (note that $p \nmid c$).
\item Let $\pi_{T}$ be any uniformizer of $L^{T}$.
\item Let $S=WU$ (this is a subgroup of $G$ since $W$ is normal in $G$).
\item Let $\pi_{S}$ be a uniformizer of $L^{S}$ such that $\pi_{S}^{c}$ is a uniformizer of $K$\\
(since $L^{S}/K$ is totally and tamely ramified, this is possible by Lemma \ref{lem:total-tame-Galois-Kummer}).
\item For $i=0,\ldots,c-1$ let $u_{i} \in \mathcal{O}_{K}^{\times}$ (e.g.\ take $u_{0}= \cdots =u_{c-1}=1$).
\item Let $\alpha = u_{0} + u_{1}\pi_{S} + u_{2}\pi_{S}^{2} +\cdots + u_{c-1}\pi_{S}^{c-1}$. 
\item Let $\beta$ be a normal integral basis generator for the unramified extension $L^{I}/K$
(such an element exists by Proposition \ref{prop:unram-NIB}.)
\end{itemize}
Then $\pi_{T}^{nb} \pi_{S}^{na} \alpha\beta$ is a free generator of $\mathfrak{P}_{L}^{n}$ over $\mathcal{O}_{K}[G]$.
\end{theorem}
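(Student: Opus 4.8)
The plan is to verify the hypotheses of Lemma~\ref{lem:generalities-NIBs} by reducing, in stages, to the two cases treated in the later sections. First I would record the single valuation computation on which everything rests: writing $e=[L:K]=|I|=p^{r}c$ for the ramification index, one has $v_{L}(\pi_{K})=p^{r}c$, and since $L^{S}/K$ and $L^{T}/K$ are totally ramified of degrees $c$ and $p^{r}$ respectively, $v_{L}(\pi_{S})=p^{r}$ and $v_{L}(\pi_{T})=c$. Hence $v_{L}(\pi_{T}^{nb}\pi_{S}^{na})=n(bc+ap^{r})=n$ by the defining relation $ap^{r}+bc=1$; as $\alpha$ and $\beta$ are units (the former because $u_{0}\in\mathcal{O}_{K}^{\times}$, the latter because its reduction is a normal basis generator and hence nonzero), the candidate $\varepsilon:=\pi_{T}^{nb}\pi_{S}^{na}\alpha\beta$ satisfies $v_{L}(\varepsilon)=n$. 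By Lemma~\ref{lem:generalities-NIBs} it then suffices to prove $\mathfrak{P}_{L}^{n}=\mathcal{O}_{K}[G]\cdot\varepsilon$.

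Next I would peel off the unramified layer. Put $M=L^{I}$, so that $M/K$ is unramified with $\Gal(M/K)$ identified with $U$ via $G=I\rtimes U$, and $\beta$ generates $\mathcal{O}_{M}$ freely over $\mathcal{O}_{K}[U]$ by Proposition~\ref{prop:unram-NIB}. The key structural observation is that $\eta:=\pi_{T}^{nb}\pi_{S}^{na}\alpha$ lies in $L^{U}$: indeed $U\le T$ and $U\le S$, so $\pi_{T},\pi_{S}$ and $\alpha$ are all fixed by $U$. Since moreover $I$ fixes $M$ pointwise and $M$ is $G$-stable, writing $G=\bigsqcup_{u\in U}Iu$ and expanding gives
\[
\mathcal{O}_{K}[G]\cdot\eta\beta=\sum_{u\in U}\mathcal{O}_{K}[I]\cdot\eta\,u(\beta)=\sum_{\iota\in I}\Big(\sum_{u\in U}\mathcal{O}_{K}\,u(\beta)\Big)\iota(\eta)=\sum_{\iota\in I}\mathcal{O}_{M}\,\iota(\eta),
\]
because $u(\beta)\in M$ is fixed by $I$ and $\sum_{u}\mathcal{O}_{K}u(\beta)=\mathcal{O}_{M}$. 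Thus the claim reduces to showing that $\eta$ is a free generator of $\mathfrak{P}_{L}^{n}$ over $\mathcal{O}_{M}[I]$; that is, to the totally ramified case of the theorem over the base field $M$, with Galois group $I=W\rtimes C$. (One checks that $\pi_{S}$ is then a uniformizer of $E:=L^{W}$, that $\pi_{T}$ is a uniformizer of $L^{C}$, and that $\alpha$ generates $\mathcal{O}_{E}$ over $\mathcal{O}_{M}[C]$, so $\eta$ is exactly the element prescribed by the construction for $L/M$.) This is the reduction step alluded to in the text.

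For the totally ramified case I would work through the tower $M\subseteq E=L^{W}\subseteq L$, in which $E/M$ is tamely and totally ramified cyclic with group $C$ (so Lemma~\ref{lem:total-tame-Galois-Kummer} and Proposition~\ref{prop:tot-tame-gen} apply, giving $\mathcal{O}_{E}=\mathcal{O}_{M}[C]\cdot\alpha$), while $L/E$ is totally and wildly ramified with group $W$ and is again weakly ramified, its $i$-th ramification group being $G_{i}\cap W$, which is trivial for $i=2$. The heart of the matter is the wild layer (the content of \S\ref{sec:tot-weak-p-extns}): that a uniformizer of $L$ freely generates $\mathfrak{P}_{L}$ over $\mathcal{O}_{E}[W]$. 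Here I would pass to the residue field via Lemma~\ref{lem:generalities-NIBs} and exploit that $\overline{E}[W]$ is a local ring whose maximal ideal is its augmentation ideal (as $W$ is a $p$-group and $\operatorname{char}\overline{E}=p$). The weakly ramified hypothesis $G_{2}=1$ guarantees that $(\sigma-1)\mathfrak{P}_{L}^{m}\subseteq\mathfrak{P}_{L}^{m+1}$ for all $\sigma\in W$, with the jump being by exactly one step when $\sigma\neq 1$; one then shows that the $|W|$ elements obtained by applying products of augmentation generators $(\sigma-1)$ to $\varepsilon$ occupy distinct steps of the $\mathfrak{P}_{L}$-adic filtration, so are linearly independent. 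Since $\dim_{\overline{E}}\overline{E}[W]=|W|=\dim_{\overline{E}}\big(\mathfrak{P}_{L}/\mathfrak{P}_{E}\mathfrak{P}_{L}\big)$, multiplication by $\varepsilon$ is then an isomorphism.

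Finally I would reassemble the tame and wild layers to obtain $\eta$ as a free generator over $\mathcal{O}_{M}[I]$ (the work of \S\ref{sec:tot-weak}), and combine this with the unramified descent of the second paragraph. I expect the main obstacle to be precisely this totally ramified core: unlike the unramified descent, the decomposition $I=W\rtimes C$ does \emph{not} induce a tensor factorization of $\mathcal{O}_{L}$ (both $E/M$ and $L/E$ are ramified), so the tame complement $C$ genuinely interacts with the wild uniformizer across the valuation filtration, and the clean ``product of generators'' argument of the second paragraph no longer applies directly. Controlling this interaction, and checking that the exponents $nb,na$ dictated by $ap^{r}+bc=1$ make the mixed element $\eta$ meet each graded piece of $\mathfrak{P}_{L}^{n}/\mathfrak{P}_{M}\mathfrak{P}_{L}^{n}$ exactly once, is where the real work, and the essential use of weak ramification, lies.
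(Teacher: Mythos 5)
Your first two paragraphs are correct and in fact coincide with the paper's own argument: the valuation computation $v_{L}(\pi_{T}^{nb}\pi_{S}^{na}\alpha\beta)=n$, and the unramified descent using $\eta\in L^{U}$, $u(\beta)\in M=L^{I}$ and $\mathcal{O}_{M}=\mathcal{O}_{K}[U]\cdot\beta$ is exactly the computation of Proposition \ref{prop:split-weak-ram}. The genuine gap is in your third paragraph, the wild core, which you rightly call the heart of the matter. You propose to show that the $|W|=p^{r}$ elements $\prod_{i}(\sigma_{i}-1)^{k_{i}}\cdot\varepsilon$ occupy \emph{distinct} steps of the $\mathfrak{P}_{L}$-adic filtration. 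This fails as soon as $W$ has rank $r\geq 2$: since $p\nmid n$, the elements $(\sigma_{1}-1)\varepsilon$ and $(\sigma_{2}-1)\varepsilon$ both have valuation exactly $n+1$, so they already occupy the same step. Worse, this cannot be repaired within your framework: because $L/E$ is totally ramified, every graded piece $\mathfrak{P}_{L}^{m}/\mathfrak{P}_{L}^{m+1}$ is one-dimensional over $\overline{E}$, so any two elements of equal valuation are automatically dependent modulo higher terms, and valuation bookkeeping alone can never establish linear independence of $p^{r}$ elements whose valuations are confined (jumps being one at a time away from multiples of $p$) to an interval of length roughly $r(p-1)\ll p^{r}$. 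Your argument is valid only for $r=1$, where the chain of valuations $n,n+1,\dots,n+p-1$ is forced. This is precisely why \S\ref{sec:tot-weak-p-extns} takes a different route: Proposition \ref{prop:non-zero-trace} (the group algebra $\overline{E}[W]$ has unique minimal ideal $\overline{E}\cdot\Tr_{W}$, so $x$ generates if and only if $\Tr_{W}\cdot x\neq 0$), combined with Hilbert's formula $v_{L}(\mathfrak{D}_{L/E})=2|W|-2$ for weakly ramified extensions and the resulting trace formula \eqref{eq:trace-formula}, which together identify the non-generators of $\overline{\mathfrak{P}_{L}^{n}}$ as exactly the classes of valuation $>n$.

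A second, lesser issue: the reassembly of the tame and wild layers over $M$ (Proposition \ref{prop:tot-weak}) is not actually carried out in your proposal; you flag it as ``where the real work lies'' and stop. In fact it is less delicate than you suggest, and the ``product of generators'' trick of your second paragraph \emph{does} apply there. The enabling fact is the full strength of Theorem \ref{thm:tot-weak-p-extension}(iii): the wild-layer generator may be taken to be \emph{any} element of valuation $n$, in particular $\pi_{F}^{nb}\pi_{E}^{na}$, whose two factors lie in $L^{C}$ and $L^{W}$ respectively; with that choice, the same semidirect-product manipulation (now with $C$ in place of $U$, using $\pi_{F}\in L^{C}$ and $\sigma_{j}(\pi_{E}^{na}\alpha)\in L^{W}$) yields $\mathfrak{P}_{L}^{n}=\mathcal{O}_{M}[I]\cdot(\pi_{F}^{nb}\pi_{E}^{na}\alpha)$. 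So the missing ingredient in your outline is not a new argument controlling tame--wild interaction, but rather the ``any element of valuation $n$'' flexibility of the wild case --- which is exactly what your filtration approach fails to deliver for $r\geq 2$.
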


\begin{proof}
This is proven in \S \ref{sec:split-weak-ram} and builds on the proof for totally ramified extensions
given in \S \ref{sec:tot-weak}, which in turn uses the result for totally ramified $p$-extensions 
proven in \S \ref{sec:tot-weak-p-extns}.
\end{proof}

\begin{theorem}\label{thm:trace-explicit-gen}
Let $L/K$ be a weakly ramified finite Galois extension of complete local fields with finite residue fields.
Let $G=\Gal(L/K)$ and let $n \in \Z$ such that $n \equiv 1 \bmod |G_{1}|$.
Let $d$ be any positive integer divisible by the exponent of $G$ (e.g.\ take $d=[L:K]$).
Let $K'/K$ be the unique unramified extension of degree $d$ and let $L'=LK'$.
Then $L'/K$ is Galois, weakly ramified, and doubly split in the sense of 
Definition \ref{def:splittings}. 
Let $\varepsilon' \in L'$ be any free generator of $\mathfrak{P}_{L'}^{n}$ over $\mathcal{O}_{K}[\Gal(L'/K)]$
(e.g.\ as in Theorem \ref{thm:explicit-gen}). 
Then $\varepsilon:=\Tr_{L'/L}(\varepsilon')$ is a free generator of $\mathfrak{P}_{L}^{n}$ over $\mathcal{O}_{K}[G]$.
\end{theorem}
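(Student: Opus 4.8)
The plan is to prove the two halves of the statement separately: first the structural assertions about $L'/K$ (Galois, doubly split, weakly ramified), and then the descent of a free generator along the trace of the unramified extension $L'/L$. For the structural part, since $d:=[L:K]$ is trivially divisible by $[L:K]$, Lemma~\ref{lem:splitting-lemma} applied with this $d$ immediately yields that $L'/K$ is Galois, that $\Gal(L'/K')$ is its inertia subgroup, and that $L'/K$ is doubly split. The only remaining point, and the one where the real content lies, is weak ramification; moreover, in order to apply Theorem~\ref{thm:explicit-gen} to $L'/K$ with the \emph{same} integer $n$, I must also check that $|G_{1}(L'/K)|=|G_{1}(L/K)|$, so that the hypothesis $n\equiv 1 \bmod |G_{1}(L'/K)|$ persists. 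I would obtain both facts at once by identifying the lower-numbering ramification groups of $L'/K$ with those of $L/K$.

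To carry this out, write $L_{0}=L^{G_{0}}$ for the inertia (maximal unramified) subfield of $L/K$, of residue degree $f=f(L/K)$. Since $K'/K$ is unramified of degree $d=ef\geq f$, uniqueness of unramified extensions gives $L_{0}\subseteq K'$ and hence $L\cap K'=L_{0}$, so a residue-degree computation shows that $K'$ is itself the inertia subfield of $L'/K$, and the standard restriction isomorphism $\Gal(L'/K')\cong\Gal(L/L_{0})$ holds. Because the lower-numbering groups $G_{i}$ (for $i\geq 0$) are defined intrinsically by the action on the valuation ring and are contained in $G_{0}$, I get $G_{i}(L'/K)=G_{i}(L'/K')$ and $G_{i}(L/L_{0})=G_{i}(L/K)$ for all $i\geq 0$. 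Since $L'/K'$ arises from $L/L_{0}$ by the unramified base change $K'/L_{0}$, the invariance of lower-numbering ramification groups under unramified base change gives the middle link $G_{i}(L'/K')=G_{i}(L/L_{0})$. Chaining these identifications yields $G_{i}(L'/K)=G_{i}(L/K)$ for all $i\geq 0$; in particular $G_{2}(L'/K)=G_{2}(L/K)=1$ and $|G_{1}(L'/K)|=|G_{1}(L/K)|$, so $L'/K$ is weakly ramified and Theorem~\ref{thm:explicit-gen} indeed applies to it with this $n$.

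For the descent, set $\widetilde{G}=\Gal(L'/K)$ and $H=\Gal(L'/L)$; since $L/K$ is Galois, $H$ is normal in $\widetilde{G}$ and $\widetilde{G}/H\cong G$. As $L'/L$ is unramified, $\pi_{L}$ is also a uniformizer of $L'$, so $\mathfrak{P}_{L'}^{n}=\pi_{L}^{n}\mathcal{O}_{L'}$, and the trace map of the unramified extension $L'/L$ is surjective on valuation rings (reduce modulo $\mathfrak{P}_{L}$ to the separable trace $\overline{L'}/\overline{L}$ and apply Nakayama), whence $\Tr_{L'/L}(\mathfrak{P}_{L'}^{n})=\pi_{L}^{n}\mathcal{O}_{L}=\mathfrak{P}_{L}^{n}$. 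A short computation exploiting the normality of $H$ shows that $\overline{\sigma}(\Tr_{L'/L}(x))=\Tr_{L'/L}(\sigma(x))$ for all $\sigma\in\widetilde{G}$ and $x\in L'$, where $\overline{\sigma}$ denotes the image of $\sigma$ in $G$; that is, $\Tr_{L'/L}$ is a homomorphism of $\mathcal{O}_{K}[\widetilde{G}]$-modules onto its image, with $\widetilde{G}$ acting on $L$ through $G$. Applying $\Tr_{L'/L}$ to the equality $\mathcal{O}_{K}[\widetilde{G}]\cdot\varepsilon'=\mathfrak{P}_{L'}^{n}$ then gives $\mathcal{O}_{K}[G]\cdot\varepsilon=\Tr_{L'/L}(\mathfrak{P}_{L'}^{n})=\mathfrak{P}_{L}^{n}$, which is precisely condition (ii) of Lemma~\ref{lem:generalities-NIBs}; that lemma upgrades this generating statement to the conclusion that $\varepsilon$ is a free generator of $\mathfrak{P}_{L}^{n}$ over $\mathcal{O}_{K}[G]$.

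I expect the main obstacle to be the middle equality $G_{i}(L'/K')=G_{i}(L/L_{0})$, namely pinning down the compatibility of the isomorphism $\Gal(L'/K')\cong\Gal(L/L_{0})$ with the ramification filtrations under the unramified base change. Once that invariance is in hand, everything else in the first part is either a direct citation of Lemma~\ref{lem:splitting-lemma} or a formal residue/degree bookkeeping, and the entire second part is a clean $\mathcal{O}_{K}[\widetilde{G}]$-equivariance argument feeding into Lemma~\ref{lem:generalities-NIBs}.
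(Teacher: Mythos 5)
Your proposal is correct and follows essentially the same route as the paper: Lemma~\ref{lem:splitting-lemma} for the Galois and doubly split claims, surjectivity of $\Tr_{L'/L}$ on powers of the maximal ideal (valid because $L'/L$ is unramified), the equivariance $\Tr_{L'/L}(\sigma(x))=\overline{\sigma}(\Tr_{L'/L}(x))$ coming from normality of $\Gal(L'/L)$, and finally Lemma~\ref{lem:generalities-NIBs} to upgrade the generating statement to freeness. The only difference is one of packaging: where the paper simply cites Byott (Prop.~4.4 of \cite{MR1702149}) for weak ramification of $L'/K$ and Serre for trace surjectivity, you prove these directly via the invariance of lower-numbering ramification groups under unramified base change, which has the merit of making explicit the fact, left implicit in the paper, that $|G_{1}(L'/K)|=|G_{1}|$, so that Theorem~\ref{thm:explicit-gen} really does apply to $L'/K$ with the same integer $n$.
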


\begin{proof}
Lemma \ref{lem:splitting-lemma} shows that $L'/K$ is Galois and doubly split.
Since $L'/L$ is unramified, \cite[Prop.\ 4.4]{MR1702149} implies that $L'/K$ is weakly ramified
and \cite[III, \S 3, Prop.\ 7]{MR554237} shows that
$\Tr_{L'/L}(\mathfrak{P}_{L'}^{n})=\mathfrak{P}_{L}^{n}$. 
Thus we obtain
\begin{align*}
\mathfrak{P}_{L}^{n} 
&= \Tr_{L'/L}(\mathcal{O}_{K}[\Gal(L'/K)] \cdot \varepsilon' ) \\
&= \mathcal{O}_{K}[\Gal(L'/K)] \cdot \Tr_{L'/L}(\varepsilon') \\
&= \mathcal{O}_{K}[G] \cdot \Tr_{L'/L}(\varepsilon').
\end{align*}
Applying Lemma \ref{lem:generalities-NIBs} with $\mathfrak{I}=\mathfrak{P}_{L}^{n}$ now 
gives the desired result.
\end{proof}

\begin{remark}
Theorem \ref{thm:main-ideal-thm} follows from Theorems \ref{thm:explicit-gen} and \ref{thm:trace-explicit-gen}. 
When specialised to the tamely ramified case, the proof essentially reduces to the proof of
Kawamoto \cite{MR825142}, and we recover the main result given therein.
\end{remark}

\section{Totally and weakly ramified $p$-extensions}\label{sec:tot-weak-p-extns}

We start by giving a slight generalisation of part of \cite[Th.\ 1]{MR606253}
(also see \cite[\S 18, Ex.\ 3]{MR632548} or \cite[Prop.\ 7]{MR2457723}).

\begin{prop}\label{prop:non-zero-trace}
Let $p$ be prime, let $k$ be any field of characteristic $p$ and let $G$ be any finite $p$-group.
Let $M$ be a left $k[G]$-module such that $\dim_{k} M = |G|$ and let $\Tr_{G} = \sum_{g \in G} g$.
Let $x \in M$. Then $x$ is a free generator of $M$ over $k[G]$ if and only if $\Tr_{G} \cdot x \neq 0$.
\end{prop}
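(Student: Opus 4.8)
The plan is to prove both directions by exploiting the structure theory of $k[G]$ when $G$ is a finite $p$-group in characteristic $p$. The key facts I would use are: first, that $k[G]$ is a local ring whose unique maximal (two-sided) ideal is the augmentation ideal $\mathfrak{m}$, equivalently its Jacobson radical $J = \rad(k[G])$; and second, that the socle of $k[G]$ as a left module over itself is one-dimensional, spanned by $\Tr_{G}$. The latter holds because $\Tr_{G}$ satisfies $g \cdot \Tr_{G} = \Tr_{G}$ for all $g \in G$, so $\Tr_{G}$ generates the trivial submodule, and a standard computation (or the self-injectivity/Frobenius property of the group algebra) shows this trivial submodule is the entire socle, i.e. $\Ann_{M}(\mathfrak{m}) = k \cdot \Tr_{G}$ when $M = k[G]$.

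For the forward direction, if $x$ is a free generator then there is an isomorphism $k[G] \xrightarrow{\sim} M$ sending $1 \mapsto x$, hence sending $\Tr_{G} \mapsto \Tr_{G} \cdot x$; since $\Tr_{G} \neq 0$ in $k[G]$, its image $\Tr_{G}\cdot x$ is nonzero. For the converse I would argue as follows. Since $\dim_{k} M = |G| = \dim_{k} k[G]$, the map $\varphi : k[G] \to M$, $a \mapsto a \cdot x$, is an isomorphism if and only if it is injective, if and only if it is surjective; so it suffices to show $\varphi$ is injective, i.e. that the annihilator $\Ann(x) = \{ a \in k[G] : a\cdot x = 0\}$ is zero. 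Suppose for contradiction that $\Ann(x)$ is a nonzero left ideal. Since $k[G]$ is local Artinian, every nonzero left ideal has nonzero intersection with the socle (because a minimal left submodule of a nonzero submodule is a simple submodule, and all simple $k[G]$-modules are trivial, so lie in the socle). Hence $\Ann(x)$ contains the socle $k \cdot \Tr_{G}$, which forces $\Tr_{G} \cdot x = 0$, contradicting the hypothesis.

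The step I expect to be the main obstacle, and the one deserving the most care, is establishing that the socle of $k[G]$ is exactly $k \cdot \Tr_{G}$ — equivalently, that every nonzero left ideal of $k[G]$ contains $\Tr_{G}$. The plan is to prove this by showing the simple module is unique (the trivial one, since $G$ is a $p$-group over a field of characteristic $p$, so $J = \mathfrak{m}$ and $k[G]/\mathfrak{m} \cong k$) and then that the left-annihilator of $\mathfrak{m}$ in $k[G]$ is one-dimensional. A clean way to see the latter is to use the symmetrizing trace / Frobenius structure: the form $(a,b) \mapsto$ (coefficient of the identity in $ab$) is a nondegenerate $G$-invariant symmetric bilinear form on $k[G]$, under which the annihilator of $\mathfrak{m}$ is the orthogonal complement of $\mathfrak{m} \cdot k[G] = \mathfrak{m}$; since $\dim \mathfrak{m} = |G| - 1$, the orthogonal complement is one-dimensional, and it visibly contains $\Tr_{G}$, so equals $k\cdot \Tr_{G}$. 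Once this structural fact is in hand, both implications follow quickly, and I would keep the remaining dimension-count and injectivity/surjectivity equivalence as routine.
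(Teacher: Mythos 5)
Your proof is correct and follows essentially the same route as the paper: both reduce, via the equal-dimension count, to showing that $\Ann_{k[G]}(x)$ is trivial if and only if $\Tr_{G} \cdot x \neq 0$, and then invoke the fact that $k \cdot \Tr_{G}$ is the unique minimal left ideal (equivalently, the socle) of $k[G]$. The only difference is that the paper cites this structural fact from the literature, whereas you prove it yourself (via the fixed-point computation and the Frobenius-form dimension count), which makes your argument more self-contained but does not change the approach.
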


\begin{proof}
Let $m_{x}: k[G] \longrightarrow M$ be the $k[G]$-homomorphism given by $y \mapsto y \cdot x$.
In particular, $m_{x}$ is a $k$-linear map with domain and codomain of equal finite dimension.
Hence $m_{x}$ is a bijection if and only if $\Ann_{k[G]}(x)$ is trivial. 
However, by \cite[Cor.\ (a)]{MR606253} (or \cite[\S 18, Ex.\ 2]{MR632548} or \cite[Prop.\ 6]{MR2457723})
the group algebra $k[G]$ has a unique minimal (left) ideal $k[G] \cdot \Tr_{G} = k \cdot \Tr_{G}$.
Thus $\Ann_{k[G]}(x)$ is trivial if and only if $\Tr_{G} \notin \Ann_{k[G]}(x)$.
\end{proof}

\begin{theorem}\label{thm:tot-weak-p-extension}
Let $K$ be a complete local field with perfect residue field of characteristic $p>0$.
Let $L/K$ be a totally and weakly ramified finite Galois $p$-extension and let $n \in \Z$.
\begin{enumerate}[label=\rm{(\roman*)}]
\item The Galois group $G:=\Gal(L/K)$ is an elementary abelian $p$-group.
\item The ideal $\mathfrak{P}_{L}^{n}$ is a free (rank $1$) $\mathcal{O}_{K}[G]$-module if and only if 
$n \equiv 1 \bmod |G|$.
\item Suppose $n \equiv 1 \bmod |G|$. 
Then $\delta \in L$ is a free generator of $\mathfrak{P}_{L}^{n}$ over $\mathcal{O}_{K}[G]$ if and only if
$v_{L}(\delta)=n$.
\end{enumerate}
\end{theorem}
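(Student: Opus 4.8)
The plan is to tackle the three parts in order, since each rests on the previous. For part (i), I would first recall that $G=G_0=G_1$ because $L/K$ is totally ramified (forcing $G=G_0$) and wildly ramified of degree a power of $p$ (so $G_0=G_1$ as the tame quotient $G_0/G_1$ is trivial). Weak ramification means $G_2=1$. I would then invoke the standard fact (e.g.\ from \cite[Ch.\ IV]{MR554237}) that the successive quotients $G_i/G_{i+1}$ embed into the additive group of the residue field $\overline{L}$, which is an elementary abelian $p$-group. Since $G_2=1$, the whole group $G=G_1=G_1/G_2$ embeds into $(\overline{L},+)$, and is therefore elementary abelian. The only subtlety is confirming that the commutator structure collapses, i.e.\ that $[G_i,G_j]\subseteq G_{i+j+1}$ (see \cite[IV, \S 2]{MR554237}); applying this with $i=j=1$ gives $[G_1,G_1]\subseteq G_3=1$, confirming $G$ is abelian, and combined with the embedding into $\overline{L}$ it is elementary abelian.

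For parts (ii) and (iii), I would reduce everything to a dimension-and-trace computation over the residue field via the machinery already in place. By Lemma \ref{lem:generalities-NIBs}, $\mathfrak{P}_L^n$ is free of rank $1$ over $\mathcal{O}_K[G]$ if and only if $\overline{\mathfrak{P}_L^n}=\mathfrak{P}_L^n/\mathfrak{P}_K\mathfrak{P}_L^n$ is free of rank $1$ over $\overline{K}[G]$, and a lift $\delta$ is a free generator precisely when its image $\overline{\delta}$ is. Since $L/K$ is totally ramified of degree $|G|$, we have $[\overline{L}:\overline{K}]=1$, so $\dim_{\overline{K}}\overline{\mathfrak{P}_L^n}=|G|$ by counting: $\mathfrak{P}_K\mathfrak{P}_L^n=\mathfrak{P}_L^{n+e}$ where $e=|G|$, so $\overline{\mathfrak{P}_L^n}$ has an $\overline{K}$-basis given by the images of $\pi_L^n,\pi_L^{n+1},\dots,\pi_L^{n+e-1}$. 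This sets up Proposition \ref{prop:non-zero-trace} with $k=\overline{K}$ and $M=\overline{\mathfrak{P}_L^n}$: the module has dimension $|G|$ over a field of characteristic $p$, and $G$ is a $p$-group, so $\overline{\delta}$ is a free generator if and only if $\Tr_G\cdot\overline{\delta}\neq 0$.

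The crux is then to compute $\Tr_G$ acting on $\overline{\mathfrak{P}_L^n}$, and I expect this to be the main obstacle. The idea is that $\Tr_G=\sum_{g\in G}g$ agrees, up to the relevant identifications, with the field trace $\Tr_{L/K}$, and the valuation of $\Tr_{L/K}(\pi_L^m)$ is controlled by the different. For weakly ramified $L/K$, the different has valuation $v_L(\mathfrak{d}_{L/K})=2(e-1)$ (this is the characterisation of weak ramification via $\sum_{i\geq 0}(|G_i|-1)=2(e-1)$ since $G_i=1$ for $i\geq 2$). From $\Tr_{L/K}(\mathfrak{P}_L^m)=\mathfrak{P}_K^{\lceil (m+2(e-1))/e\rceil}$ type formulas (\cite[III, \S 3]{MR554237}), I would show that $\Tr_G$ maps $\overline{\mathfrak{P}_L^n}$ nontrivially precisely when $n\equiv 1\bmod e$, and that in that case the image $\Tr_G\cdot\overline{\delta}$ is nonzero if and only if $v_L(\delta)=n$ exactly (if $v_L(\delta)>n$ then $\overline{\delta}=0$ already, and the nonvanishing of the trace on the bottom graded piece is what forces the valuation to be exactly $n$). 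Assembling these gives (ii) — freeness iff $n\equiv 1\bmod |G|$ — and (iii) — the generators are exactly the elements of valuation $n$. The delicate point throughout is matching the group-ring trace $\Tr_G$ with the field trace and tracking the congruence class of $n$ modulo $e$ against the different exponent $2(e-1)$; once the weakly ramified different computation is in hand, the rest follows from the established reduction.
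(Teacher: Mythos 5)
Your overall strategy coincides with the paper's: part (i) via $G=G_{0}=G_{1}$ and the fact that $G_{1}/G_{2}$ is elementary abelian; parts (ii) and (iii) by reducing modulo $\mathfrak{P}_{K}$ via Lemma \ref{lem:generalities-NIBs}, invoking Proposition \ref{prop:non-zero-trace} for the $p$-group $G$ acting on the $|G|$-dimensional space $\overline{\mathfrak{P}_{L}^{n}}$, computing $v_{L}(\mathfrak{D}_{L/K})=2|G|-2$ from Hilbert's formula, and feeding this into Serre's trace formula. Two points need repair, one small and one substantive. The small one: the trace formula carries a floor, not a ceiling, namely $\Tr_{L/K}(\mathfrak{P}_{L}^{m})=\mathfrak{P}_{K}^{\lfloor (m+2e-2)/e\rfloor}$ where $e=|G|$ (the paper writes this as $\mathfrak{P}_{K}^{2+\lfloor (m-2)/e\rfloor}$); with a ceiling the formula already fails for tamely ramified extensions.

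The substantive gap is in your justification of the pointwise statement in (iii). You write that if $v_{L}(\delta)>n$ then $\overline{\delta}=0$; this is false, since $\overline{\mathfrak{P}_{L}^{n}}=\mathfrak{P}_{L}^{n}/\mathfrak{P}_{K}\mathfrak{P}_{L}^{n}=\mathfrak{P}_{L}^{n}/\mathfrak{P}_{L}^{n+e}$, so every $\delta$ with $n<v_{L}(\delta)<n+e$ is nonzero in the quotient, and these are exactly the elements you must rule out as generators. Moreover, the ideal-level equality $\Tr_{L/K}(\mathfrak{P}_{L}^{n})=\mathfrak{P}_{K}^{(n-1)/e+1}$ does not by itself show that \emph{every individual} element of valuation exactly $n$ has nonvanishing trace image (trace is additive, not multiplicative, so the formula only guarantees that \emph{some} element of the ideal does). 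The paper closes precisely this gap with a kernel-dimension argument: applying the trace formula to $\mathfrak{P}_{L}^{n+1}$ shows that $\Tr_{G}$ kills $\mathfrak{K}:=\mathfrak{P}_{L}^{n+1}/\mathfrak{P}_{L}^{n+e}$, which has $\overline{K}$-dimension $e-1$; since the image of $x\mapsto \Tr_{G}\cdot x$ on $\overline{\mathfrak{P}_{L}^{n}}$ is the one-dimensional space $\mathfrak{P}_{L}^{n+e-1}/\mathfrak{P}_{L}^{n+e}$, the kernel has dimension exactly $e-1$ and hence equals $\mathfrak{K}$. Thus $\Tr_{G}\cdot\overline{\delta}\neq 0$ if and only if $\overline{\delta}\notin\mathfrak{K}$, i.e.\ if and only if $v_{L}(\delta)=n$, and Proposition \ref{prop:non-zero-trace} together with Lemma \ref{lem:generalities-NIBs} finishes the proof. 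Your plan assembles all the right ingredients, but this dimension count (or an equivalent substitute) is genuinely needed and is absent from your sketch.
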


\begin{remark}\label{rmk:p-power-other-proofs}
Using the theory of Galois scaffolds, 
Theorem \ref{thm:tot-weak-p-extension} (iii) is also proven in recent work of Byott and Elder \cite[Prop.\ 4.4]{large_galois_scaffolds}
when $n=1$ (in fact, op.\ cit.\ also gives the analogous result for $\mathcal{O}_{L}$ over its associated order $\mathfrak{A}_{L/K}$); 
the result for general $n \equiv 1 \bmod |G|$ is trivial to deduce from this.
Moreover, there are two other proofs of Theorem \ref{thm:tot-weak-p-extension} 
(iii) in the literature in the case that  $n=1$ and $K$ is a $p$-adic field:
Vostokov \cite[Prop.\ 2]{0462.12004} proved the result by a direct computation;
using the theory of Lubin-Tate extensions, Byott \cite[Cor.\ 4.3]{MR1702149} showed that any uniformizer $\pi_{L}$ of $L$
is a free generator of $\mathcal{O}_{L}$ over $\mathcal{O}_{K}[G][\pi_{K}^{-1}\Tr_{G}]$,
and from this Vinatier \cite[Prop.\ 2.4]{MR2167720} deduced the result.
One advantage of the proof below is that it is short, elementary and largely self-contained.
\end{remark}

\begin{example}
Let $K$ be a finite unramified extension of $\Q_{p}$ and let $L$ be the unique intermediate field of the extension 
$K(\zeta_{p^{2}})/K$ such that $[L:K]=p$. 
Then it is straightforward to check that $L/K$ is a totally and weakly ramified extension.
Thus any uniformizer $\pi_{L}$ of $L$ is a free generator of $\mathfrak{P}_{L}$ over $\mathcal{O}_{K}[\Gal(L/K)]$.
\end{example}

\begin{example}
Let $K=\F((t))$ be a local function field with perfect residue field $\F$ of characteristic $p>0$.
Let $L=K(x)$ where $x$ satisfies $x^{p}-x = \pi_{K}^{-1}$ where $\pi_{K}$ is any uniformizer of $K$
(e.g.\ $\pi_{K}=t$). Then by Artin-Schreier theory, $L/K$ is a cyclic Galois extension of degree $p$.
It is straightforward to check that $L/K$ is totally and weakly ramified, and so 
any uniformizer $\pi_{L}$ of $L$ (e.g.\ $\pi_{L}=x^{-1}$) is a free generator of $\mathfrak{P}_{L}$ over $\mathcal{O}_{K}[\Gal(L/K)]$.
\end{example}

\begin{proof}[Proof of Theorem \ref{thm:tot-weak-p-extension}]
Part (i) is standard and follows from the hypotheses and
the fact that $G_{1}/G_{2}$ is always an elementary 
abelian $p$-group (see \cite[IV, \S 2, Cor.\ 3]{MR554237}).

Let $\mathfrak{D}_{L/K}$ denote the different of $L/K$. 
Then as $L/K$ is weakly ramified, 
Hilbert's formula (\cite[IV, \S 1, Prop.\ 4]{MR554237}) shows that 
$v_{L}(\mathfrak{D}_{L/K})=2|G|-2$.
Now from \cite[III, \S 3, Prop.\ 7]{MR554237} 
it follows that for any $i \in \Z$ we have
\begin{equation}\label{eq:trace-formula}
\Tr_{G}(\mathfrak{P}_{L}^{i}) 
= \Tr_{L/K}(\mathfrak{P}_{L}^{i}) 
= \mathfrak{P}_{K}^{2 + \left\lfloor \frac{i-2}{|G|} \right\rfloor}
\end{equation}
where $\lfloor x \rfloor$ denotes the largest $k \in \Z$ such that $k \leq x$.
For $i \in \Z$ define
\[
\overline{\mathfrak{P}_{L}^{i}} := \mathfrak{P}_{L}^{i} / \mathfrak{P}_{K}\mathfrak{P}_{L}^{i}
= \mathfrak{P}_{L}^{i} / \mathfrak{P}_{L}^{|G|+i}.
\]
Then by \eqref{eq:trace-formula} we have
\[
\Tr_{G}(\overline{\mathfrak{P}_{L}^{i}}) 
= \frac{\Tr_{G}(\mathfrak{P}_{L}^{i}) + \mathfrak{P}_{L}^{|G|+i}}{\mathfrak{P}_{L}^{|G|+i}}
= \left\{ 
\begin{array}{cl}
\mathfrak{P}_{L}^{|G|+i-1}/\mathfrak{P}_{L}^{|G|+i} & \textrm{if } i \equiv 1 \bmod |G|,  \\
0 & \textrm{otherwise}.
\end{array}
\right.
\]
Hence if $n \not \equiv 1 \bmod |G|$, by Proposition \ref{prop:non-zero-trace} we have that 
$\overline{\mathfrak{P}_{L}^{n}} \neq \overline{K}[G] \cdot \overline{\delta}$ for all 
$\overline{\delta} \in \overline{\mathfrak{P}_{L}^{n}}$,
and so $\mathfrak{P}_{L}^{n}$ is not free over $\mathcal{O}_{K}[G]$ by 
Lemma \ref{lem:generalities-NIBs} with $\mathfrak{I} = \mathfrak{P}_{L}^{n}$.

Now suppose $n \equiv 1 \bmod |G|$. 
Let $\theta: \overline{\mathfrak{P}_{L}^{n}} \longrightarrow \overline{\mathfrak{P}_{L}^{n}}$
be defined by $x \mapsto \Tr_{G} \cdot x$. 
Then $\theta$ is a $\overline{K}$-linear map with $\dim_{\overline{K}} \operatorname{im} \theta = 1$
and so $\dim_{\overline{K}} \ker \theta = |G|-1$.
Furthermore, $\mathfrak{K} := \mathfrak{P}_{L}^{n+1}/\mathfrak{P}_{L}^{n+|G|}$ is a 
$\overline{K}[G]$-submodule of $\overline{\mathfrak{P}_{L}^{n}}$ 
with $\dim_{\overline{K}} \mathfrak{K} = |G|-1$ and by \eqref{eq:trace-formula} we have
\[
\Tr_{G}(\mathfrak{K}) 
= \frac{\Tr_{G}(\mathfrak{P}_{L}^{n+1}) + \mathfrak{P}_{L}^{n+|G|}}{ \mathfrak{P}_{L}^{n+|G|}}
=  \frac{\mathfrak{P}_{K}^{2 + \left\lfloor \frac{n-1}{|G|} \right\rfloor } + \mathfrak{P}_{L}^{n+|G|}}{ \mathfrak{P}_{L}^{n+|G|}}  
= 0.
\]
Hence $\mathfrak{K} \leq \ker \theta$ and this containment is in fact an equality as  
both spaces are of equal finite dimension over $\overline{K}$.
Thus by Proposition \ref{prop:non-zero-trace} we have that
\[
\overline{K}[G] \cdot \overline{\delta} = \overline{\mathfrak{P}_{L}^{n}} \Longleftrightarrow \overline{\delta} \in \overline{\mathfrak{P}_{L}^{n}} - \mathfrak{K} = 
\mathfrak{P}_{L}^{n} / \mathfrak{P}_{L}^{|G|+n} - \mathfrak{P}_{L}^{n+1} / \mathfrak{P}_{L}^{|G|+n}.
\]
Therefore by Lemma \ref{lem:generalities-NIBs} with $\mathfrak{I}=\mathfrak{P}_{L}^{n}$
we see that $\delta \in L$ is a free generator of $\mathfrak{P}_{L}^{n}$ over
$\mathcal{O}_{K}[G]$ if and only if $v_{L}(\delta)=n$. 
\end{proof}

\section{Totally and weakly ramified extensions of arbitrary degree}\label{sec:tot-weak}

Let $M$ be a complete local field with finite residue field of characteristic $p$.
Let $L/M$ be a totally and weakly ramified finite Galois extension and let $I=G_{0}=\Gal(L/M)$.
Since $G_{2}$ is trivial, $W:=G_{1}$ is an elementary abelian $p$-group. 
By Remark \ref{rmk:totally-ram-implies-doubly-split},
$L/M$ is split with respect to wild inertia, i.e., 
$I$ decomposes as a semi-direct product $I = W \rtimes C$ 
for some cyclic subgroup
$C$ of $I$. (Note that as $L/M$ is totally ramified, we can and do write $C$ instead of $T$ here; this is consistent with the notation used in \S
\ref{sec:split-weak-ram}.)
Let $E=L^{W}$ and $F=L^{C}$ be the subfields of $L$ fixed by $W$ and $C$, respectively.
Note that the choice of $C$ (and hence of $F$) 
is not necessarily unique and that the order of $C$ is prime to $p$.
We identify $\Gal(E/M)$ with $C=\Gal(L/F)$ via the restriction map 
$C \rightarrow \Gal(E/M)$, $\gamma \mapsto \gamma|_{E}$.
The situation is represented by the following field diagram.
\[
\xymatrix@1@!0@=30pt { 
& L &  \\
E \ar@{-}[ur]^{W} & & F \ar@{-}[ul]_{C}\\
& M  \ar@{-}[uu]_{I} \ar@{-}[ur] \ar@{-}[ul]^{C} & \\
}
\]
Both $L/E$ and $F/M$ are are totally and wildly ramified $p$-extensions and both $L/F$ and $E/M$ are totally and tamely ramified. Note that $F/M$ need not be Galois.

Define $r$ by $p^{r}=[L:E]=[F:M]=|W|$ and let $c=[L:F]=[E:M]=|C|$. 
Since $E/M$ is totally and tamely ramified, by Lemma \ref{lem:total-tame-Galois-Kummer} (i)
there exist uniformizers $\pi_{E}$ and $\pi_{M}$
of $E$ and $M$ respectively such that $\pi_{E}^{c} = \pi_{M}$.
By B\'ezout's Lemma, there exist integers $a,b$ such that $ap^{r} + bc=1$.

\begin{prop}\label{prop:tot-weak}
Let $n \in \Z$ such that $n \equiv 1 \bmod |W|$.
For $i=0,\ldots,c-1$ let $u_{i} \in \mathcal{O}_{M}^{\times}$.
Let $\pi_{E}$ be a uniformizer chosen as above, let $\alpha = u_{0} + u_{1}\pi_{E} + \cdots + u_{c-1}\pi_{E}^{c-1}$,
and let $\pi_{F}$ be any uniformizer of $F$.
Then $\pi_{F}^{nb} \pi_{E}^{na} \alpha$ is a free generator of $\mathfrak{P}_{L}^{n}$ over $\mathcal{O}_{M}[I]$.
\end{prop}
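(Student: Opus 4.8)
The plan is to show directly that $\mathcal{O}_{M}[I] \cdot \gamma = \mathfrak{P}_{L}^{n}$, where $\gamma := \pi_{F}^{nb} \pi_{E}^{na} \alpha$, and then to invoke Lemma \ref{lem:generalities-NIBs} (with $K = M$, $G = I$, $\mathfrak{I} = \mathfrak{P}_{L}^{n}$) to upgrade this equality to freeness. First I would record the valuations $v_{L}(\pi_{F}) = [L:F] = c$, $v_{L}(\pi_{E}) = [L:E] = p^{r}$ and $v_{L}(\alpha) = 0$ (since $u_{0} \in \mathcal{O}_{M}^{\times}$), so that $v_{L}(\gamma) = nbc + nap^{r} = n(ap^{r} + bc) = n$; in particular $\gamma \in \mathfrak{P}_{L}^{n}$ and the containment $\mathcal{O}_{M}[I]\cdot\gamma \subseteq \mathfrak{P}_{L}^{n}$ is immediate because $I$ preserves $v_{L}$. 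The strategy for the reverse containment is to factor the argument through the intermediate field $E$: first generate $\mathfrak{P}_{L}^{n}$ over the larger ring $\mathcal{O}_{E}[W]$ using the $p$-extension result, and then descend from $\mathcal{O}_{E}[W]$ to $\mathcal{O}_{M}[I]$ using the semidirect-product decomposition $I = W \rtimes C$ together with the tame result.

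For the first step I would apply Theorem \ref{thm:tot-weak-p-extension} to the extension $L/E$. This requires checking that $L/E$ is a totally and weakly ramified finite Galois $p$-extension: it is Galois with group $W$ (a $p$-group), totally ramified since $L/M$ is, and weakly ramified because the lower-numbering ramification groups of $L/E$ are computed with the same $v_{L}$ as those of $L/M$, whence $(L/E)_{i} = W \cap G_{i}$ and in particular $(L/E)_{2} = W \cap G_{2} = 1$. Since $n \equiv 1 \bmod |W|$ and $v_{L}(\gamma) = n$, part (iii) of that theorem gives $\mathcal{O}_{E}[W]\cdot\gamma = \mathfrak{P}_{L}^{n}$.

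For the descent I would use that, because $W$ is normal in $I$ and $I = CW$, every element of $I$ is uniquely $\sigma^{j}w$ with $0 \le j \le c-1$ and $w \in W$ (here $\sigma$ generates $C$), so that $\mathcal{O}_{M}[I]\cdot\gamma = \sum_{j=0}^{c-1} \sigma^{j}(\mathcal{O}_{M}[W]\cdot\gamma) = \sum_{j=0}^{c-1} \mathcal{O}_{M}[W]\cdot\sigma^{j}(\gamma)$, the last equality by normality of $W$. The key computation is then that $\sigma^{j}(\gamma)$ differs from $\gamma$ only by a factor lying in $E$: since $C$ fixes $F \ni \pi_{F}$ and acts on $E$ as $\Gal(E/M)$ with $\sigma(\pi_{E}) = \zeta \pi_{E}$, one finds $\sigma^{j}(\gamma) = (\beta_{j}/\beta_{0})\gamma$, where $\beta_{j} := \zeta^{jna}\sigma^{j}(\alpha) \in \mathcal{O}_{E}^{\times}$ and $\beta_{0} = \alpha$. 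Crucially $\beta_{j}/\beta_{0} \in E = L^{W}$ is fixed by $W$, so it commutes past the $W$-action and $\mathcal{O}_{M}[W]\cdot\sigma^{j}(\gamma) = (\beta_{j}/\beta_{0})\,\mathcal{O}_{M}[W]\cdot\gamma$. Summing over $j$ and using that $\sum_{j} \mathcal{O}_{M}\beta_{j} = \sum_{j}\mathcal{O}_{M}\sigma^{j}(\alpha) = \mathcal{O}_{E}$ — which is exactly Proposition \ref{prop:tot-tame-gen} applied to the tame extension $E/M$ (the case $n = 0$, where $\alpha$ freely generates $\mathcal{O}_{E}$ over $\mathcal{O}_{M}[C]$), together with the fact that $\beta_{0}$ is a unit — I obtain $\mathcal{O}_{M}[I]\cdot\gamma = \mathcal{O}_{E}\cdot(\mathcal{O}_{M}[W]\cdot\gamma) = \mathcal{O}_{E}[W]\cdot\gamma$, the last step because $\mathcal{O}_{E}$ is $W$-fixed.

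Combining the two steps yields $\mathcal{O}_{M}[I]\cdot\gamma = \mathcal{O}_{E}[W]\cdot\gamma = \mathfrak{P}_{L}^{n}$, and Lemma \ref{lem:generalities-NIBs} then promotes this equality to the statement that $\gamma$ is a \emph{free} generator. I expect the main obstacle to be the bookkeeping in the descent step — specifically the observation that the $C$-conjugates $\sigma^{j}(\gamma)$ differ from $\gamma$ by $W$-fixed unit factors $\beta_{j}/\beta_{0} \in \mathcal{O}_{E}^{\times}$, which is what converts the spread of $\gamma$ over the cosets of $W$ into multiplication by a full $\mathcal{O}_{M}$-basis of $\mathcal{O}_{E}$ and thereby enlarges $\mathcal{O}_{M}[W]$ to $\mathcal{O}_{E}[W]$. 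Getting the semidirect-product group-ring manipulation and the root-of-unity factors right is the delicate part, whereas the two ramification-theoretic inputs are quoted wholesale.
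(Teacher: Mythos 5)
Your proof is correct and takes essentially the same route as the paper: both factor through $E=L^{W}$, apply Theorem \ref{thm:tot-weak-p-extension}(iii) to the weakly ramified $p$-extension $L/E$ (with the same check that the ramification groups of $L/E$ are $W\cap G_{i}$), apply Proposition \ref{prop:tot-tame-gen} to the tame extension $E/M$, combine the two pieces via the semidirect product $I=W\rtimes C$, and finish with Lemma \ref{lem:generalities-NIBs}. The only difference is bookkeeping in the combination step: the paper rearranges direct sums directly using that $\pi_{F}\in L^{C}$ and $\sigma_{j}(\pi_{E}^{na}\alpha)\in L^{W}$, whereas you route the same facts through the $W$-fixed unit factors $\beta_{j}/\beta_{0}\in\mathcal{O}_{E}^{\times}$ and the explicit Kummer action; both versions are valid.
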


\begin{proof}
Write $W  = \{ \tau_{i} \}$ and $C=\{ \sigma_{j} \}$.
Since $L/M$ is weakly ramified, it follows directly from the definition of the ramification groups that 
$L/E$ is also weakly ramified. 
Hence by Theorem \ref{thm:tot-weak-p-extension} (iii) any $\delta \in L$ with $v_{L}(\delta)=n$
is a free generator of $\mathfrak{P}_{L}^{n}$ over $\mathcal{O}_{E}[W]$. 
However, we have
$v_{L}(\pi_{F}^{b}\pi_{E}^{a}) = bc + ap^{r} = 1$, and so in particular we may 
take $\delta= \pi_{F}^{nb}\pi_{E}^{na}$.
Furthermore, by Proposition \ref{prop:tot-tame-gen} we have that 
$\pi_{E}^{na}\alpha$ is a free generator of the fractional ideal $\pi_{E}^{na}\mathcal{O}_{E}$ over 
$\mathcal{O}_{M}[C]$. 
Therefore we have 
\begin{align*}
\mathfrak{P}_{L}^{n} 
&= \mathcal{O}_{E}[W] \cdot (\pi_{F}^{nb}\pi_{E}^{na}) \\
&= \bigoplus_{i} \tau_{i}(\pi_{F}^{nb}\pi_{E}^{na})\mathcal{O}_{E} \\
&=  \bigoplus_{i} \tau_{i}(\pi_{F}^{nb})(\pi_{E}^{na}\mathcal{O}_{E})
\quad \textrm{since } \pi_{E} \in E=L^{W} \\
&= \bigoplus_{i} \tau_{i}(\pi_{F}^{nb}) (\mathcal{O}_{M}[C] \cdot \pi_{E}^{na}\alpha)\\
&= \bigoplus_{i} \tau_{i}(\pi_{F}^{nb}) \bigoplus_{j} \sigma_{j}(\pi_{E}^{na}\alpha) \mathcal{O}_{M}\\
&= \bigoplus_{i}  \bigoplus_{j} \tau_{i}(\pi_{F}^{nb}) \sigma_{j}(\pi_{E}^{na}\alpha) \mathcal{O}_{M}\ \\
&= \bigoplus_{i}  \bigoplus_{j} \tau_{i} \sigma_{j}(\pi_{F}^{nb}) \sigma_{j}(\pi_{E}^{na}\alpha) \mathcal{O}_{M} 
\quad \textrm{since } \pi_{F} \in F=L^{C}\\
&= \bigoplus_{i}  \bigoplus_{j} \tau_{i} \sigma_{j}(\pi_{F}^{nb}) \tau_{i}\sigma_{j}(\pi_{E}^{na}\alpha) \mathcal{O}_{M} 
\quad \textrm{since } \sigma_{j}(\pi_{E}^{na}\alpha) \in E=L^{W}\\
&= \bigoplus_{i}  \bigoplus_{j} \tau_{i} \sigma_{j}(\pi_{F}^{nb}\pi_{E}^{na}\alpha) \mathcal{O}_{M}\\
&= \mathcal{O}_{M}[I] \cdot (\pi_{F}^{nb}\pi_{E}^{na}\alpha).
\end{align*}
The result now follows from Lemma \ref{lem:generalities-NIBs} with $\mathfrak{I}=\mathfrak{P}_{L}^{n}$.
\end{proof}

\begin{remark}
The author is grateful to Nigel Byott for the following observation and to the referee for suggestions regarding explicit examples. 
If $L/K$ is abelian, not of $p$-power degree, and totally and wildly ramified, then $L/K$ cannot be weakly ramified 
(see e.g.\ \cite[IV, \S 2, Cor.\ 2]{MR554237}).
In particular, $\Q_{p}(\zeta_{p^{2}})/\Q_{p}$ is not weakly ramified, even though the subextension 
$\Q_{p}(\zeta_{p^{2}})/\Q_{p}(\zeta_{p})$ is weakly ramified.
However, there do exist non-abelian Galois extensions of local fields, not of $p$-power degree, that are totally, wildly and weakly ramified.
For example, let $K=\Q_{3}$ and let $L=\Q_{3}(\zeta_{3}, \sqrt[3]{2})$. 
Then $L/K$ is Galois with $\Gal(L/K) \simeq S_{3}$, the symmetric group on three letters.
Furthermore, $L/K$ is totally, wildly and weakly ramified. 
\end{remark}

\section{Weakly ramified extensions that are doubly split}\label{sec:split-weak-ram}

Let $K$ be a complete local field with finite residue field of characteristic $p$.
Let $L/K$ be a weakly ramified finite Galois extension and let $G=\Gal(L/K)$.
Suppose that $L/K$ is doubly split and adopt the notation of Definition \ref{def:splittings}.
Let $M=L^{I}$ be the inertia subfield and let $N=L^{U}$. 
Note that the choice of $U$ (and hence of $N$) 
is not necessarily unique.
We identify $\Gal(M/K)$ with $U=\Gal(L/N)$ via the restriction map 
$U \rightarrow \Gal(M/K)$, $\gamma \mapsto \gamma|_{M}$.
The extension $L/M$ `decomposes' exactly as in \S \ref{sec:tot-weak}
and we henceforth assume all the notation used therein.
The situation is represented by the following pair of field diagrams.
\[
\xymatrix@1@!0@=30pt { 
& L &  & &  & L & \\
M \ar@{-}[ur]^{I} &  & N \ar@{-}[ul]_{U} & & E \ar@{-}[ur]^{W} & & F \ar@{-}[ul]_{C}\\
& K   \ar@{-}[uu]_{G} \ar@{-}[ur] \ar@{-}[ul]^{U} & & & & M   \ar@{-}[uu]_{I} \ar@{-}[ur] \ar@{-}[ul]^{C} &\\
}
\]

We note that $S:=WU$ is a subgroup of $G$ since $W$ is normal in $G$ and that $T=CU$ is a subgroup of $G$ by hypothesis.
Thus 
\[
E \cap N = L^{W} \cap L^{U}= L^{WU}=L^{S} \quad \textrm{and}
\quad F \cap N = L^{C} \cap L^{U} = L^{CU} = L^{T}.
\]
Furthermore, $W$, $I$ and $C$ are normal in $S$, $G$ and $T$, respectively. 
Therefore we have the following field diagram in which we have identified $U$ with the Galois group of the relevant extensions 
via restriction maps as above, and unmarked extensions are not necessarily Galois.
\[
\xymatrix@1@!0@=30pt { 
& & L \ar@/^2pc/@{-}[ddrr]^{T}  & &  \\
& E \ar@{-}[ur]^{W} \ar@{-}[dl]_{U} & & F \ar@{-}[dr]^{U} \ar@{-}[ul]_{C} & \\
E \cap N  \ar@/^2pc/@{-}[uurr]^{S} \ar@{-}[ddrr] & & M  \ar@{-}[uu]_{I} \ar@{-}[dd]_{U} \ar@{-}[ur] \ar@{-}[ul]^{C} & &  F \cap N \ar@{-}[ddll] \\
& & & & \\
& & K & &
}
\]

We now choose elements in the various intermediate fields, from which we will construct a 
free generator of $\mathfrak{P}_{L}^{n}$ over $\mathcal{O}_{K}[G]$ when $n \equiv 1 \bmod |W|$.
We adopt the notation of \S \ref{sec:tot-weak}, so that $p^{r}=[L:E]=[F:M]=|W|$, $c=[L:F]=[E:M]=|C|$,
and  $a,b \in \Z$ satisfy $ap^{r}+bc=1$.
Let $\pi_{T}$ be any uniformizer of $L^{T}=F \cap N$. 
Let $\pi_{S}$ be a uniformizer of $L^{S}=E \cap N$ such that $\pi_{S}^{c}$ is a uniformizer of $K$;
this is possible by Lemma \ref{lem:total-tame-Galois-Kummer} (i) since $L^{S}/K$ is totally and tamely ramified. 
Note that both $\pi_{S}$ and $\pi_{T}$ belong to $N$.
Since $M/K$ is unramified, by Proposition \ref{prop:unram-NIB} 
there exists $\beta \in \mathcal{O}_{M}$ such that
$\mathcal{O}_{M} = \mathcal{O}_{K}[U] \cdot \beta$.

\begin{prop}\label{prop:split-weak-ram}
Let $n \in \Z$ such that $n \equiv 1 \bmod |W|$.
For $i=0,\ldots,c-1$ let $u_{i} \in \mathcal{O}_{K}^{\times}$.
Let $\pi_{S}$ and $\pi_{T}$ be uniformizers chosen as above and let $\alpha = u_{0} + u_{1}\pi_{S} + \cdots + u_{c-1}\pi_{S}^{c-1}$.
Then $\pi_{T}^{nb} \pi_{S}^{na} \alpha\beta$ is a free generator of $\mathfrak{P}_{L}^{n}$ over $\mathcal{O}_{K}[G]$.
\end{prop}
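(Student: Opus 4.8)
The plan is to bootstrap the totally ramified result of Proposition \ref{prop:tot-weak} (for the extension $L/M$) across the unramified extension $M/K$, exactly mirroring the tensor-product structure $G = I \rtimes U$ that the double splitting provides. First I would apply Proposition \ref{prop:tot-weak} to the totally and weakly ramified extension $L/M$ with inertia group $I = W \rtimes C$. Here the uniformizers $\pi_S$ and $\pi_T$ chosen above play the roles of $\pi_E$ and $\pi_F$ from \S\ref{sec:tot-weak}: indeed $\pi_S \in L^S = E \cap N \subseteq E$ is a uniformizer of $E\cap N$ with $\pi_S^c$ a uniformizer of $K$, and since $E/M$ is totally ramified of degree $c$ one checks $\pi_S$ remains a uniformizer of $E$; similarly $\pi_T \in L^T = F\cap N \subseteq F$ serves as a uniformizer of $F$. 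Thus Proposition \ref{prop:tot-weak} (applied with $\pi_E \leftarrow \pi_S$, $\pi_F \leftarrow \pi_T$, and the same $u_i$, now viewed in $\mathcal{O}_M^\times$) yields that $\gamma := \pi_T^{nb}\pi_S^{na}\alpha$ is a free generator of $\mathfrak{P}_L^n$ over $\mathcal{O}_M[I]$, so that $\mathfrak{P}_L^n = \mathcal{O}_M[I]\cdot\gamma$.

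Next I would promote this from $\mathcal{O}_M$-coefficients to $\mathcal{O}_K$-coefficients using the normal integral basis generator $\beta$ for the unramified extension $M/K$, which gives $\mathcal{O}_M = \mathcal{O}_K[U]\cdot\beta$. Writing $U = \{\upsilon_\ell\}$ and combining the two decompositions, one wants to show
\[
\mathfrak{P}_L^n = \mathcal{O}_M[I]\cdot\gamma = \bigoplus_{\sigma \in I}\sigma(\gamma)\,\mathcal{O}_M = \bigoplus_{\sigma\in I}\bigoplus_{\upsilon\in U}\upsilon\sigma(\gamma)\,\mathcal{O}_K = \mathcal{O}_K[G]\cdot(\gamma\beta).
\]
The crucial manipulation, just as in the proof of Proposition \ref{prop:tot-weak}, is to absorb the factor $\beta$ into the group action. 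Concretely, I would expand $\sigma(\gamma)\mathcal{O}_M = \sigma(\gamma)\bigoplus_\upsilon \upsilon(\beta)\mathcal{O}_K$ and then use that $\sigma(\gamma)$ is fixed by $U$ appropriately — more precisely, that the $U$-action commutes past the $I$-part in a controlled way because $G = I\rtimes U$. The key field-theoretic facts enabling this are that $\pi_S,\pi_T \in N = L^U$ (so they are $U$-fixed) while $\beta \in M = L^I$ (so it is $I$-fixed), which lets one rewrite $\upsilon\sigma(\gamma\beta)$ by separating the $I$-action on $\gamma$ from the $U$-action on $\beta$, exactly the type of telescoping carried out in the displayed computation of \S\ref{sec:tot-weak}.

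The main obstacle I anticipate is verifying that these commutation/fixity relations interact correctly to collapse the double sum into $\mathcal{O}_K[G]\cdot(\gamma\beta)$ \emph{as a direct sum}, i.e.\ that no coincidences among the elements $\upsilon\sigma(\gamma\beta)$ destroy $\mathcal{O}_K$-linear independence. Since $\alpha$ involves powers of $\pi_S$, one must confirm that $\gamma = \pi_T^{nb}\pi_S^{na}\alpha$ lies in the right place for the group elements $\upsilon\sigma$ to permute its conjugates freely; this is where the semidirect structure $G = I \rtimes U$ and the fact that $\beta$ generates $\mathcal{O}_M$ normally over $\mathcal{O}_K$ must be combined carefully. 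Once the chain of equalities is established, the proof concludes, as before, by invoking Lemma \ref{lem:generalities-NIBs} with $\mathfrak{I} = \mathfrak{P}_L^n$ to upgrade the generation statement to the assertion that $\pi_T^{nb}\pi_S^{na}\alpha\beta$ is a \emph{free} generator of $\mathfrak{P}_L^n$ over $\mathcal{O}_K[G]$.
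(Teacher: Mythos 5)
Your plan is correct and follows essentially the same route as the paper's own proof: apply Proposition \ref{prop:tot-weak} to $L/M$ with $\pi_{S},\pi_{T}$ in the roles of $\pi_{E},\pi_{F}$, then collapse $\mathcal{O}_{M}[I]\cdot\gamma$ into $\mathcal{O}_{K}[G]\cdot(\gamma\beta)$ using exactly the two fixity facts you identify ($\gamma\in N=L^{U}$ and $\beta\in M=L^{I}$, the latter $G$-stable since $I$ is normal), and finish with Lemma \ref{lem:generalities-NIBs} applied to $\mathfrak{I}=\mathfrak{P}_{L}^{n}$. The obstacle you anticipate is handled by precisely the telescoping you sketch: each rewriting of the summands $\tau_{i}(\gamma)\sigma_{j}(\beta)\mathcal{O}_{K}$ into $\tau_{i}\sigma_{j}(\gamma\beta)\mathcal{O}_{K}$ merely relabels them, so the direct-sum structure survives and no linear-independence issue arises.
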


\begin{proof}
Let $\gamma = \pi_{T}^{nb} \pi_{S}^{na} \alpha$. 
Note that as $E/E\cap N$ is unramified, $\pi_{S}$ is a uniformizer of $E$.
Similarly, $\pi_{S}^{c}$ is a uniformizer of $M$ and $\pi_{T}$ is a uniformizer of $F$. 
Thus by Proposition \ref{prop:tot-weak} we have $\mathfrak{P}_{L}^{n} = \mathcal{O}_{M}[I] \cdot \gamma$.
A key point is that $\gamma$ belongs to $N$ since both $\pi_{S}$ and $\pi_{T}$ were chosen to be in $N$. 
Write $I = \{ \tau_{i} \}$ and $U=\{ \sigma_{j} \}$. Then
\begin{align*}
\mathfrak{P}_{L}^{n} &= \mathcal{O}_{M}[I] \cdot \gamma\\ 
&= \bigoplus_{i} \tau_{i}(\gamma)\mathcal{O}_{M}\\
&= \bigoplus_{i} \tau_{i}(\gamma)(\mathcal{O}_{K}[U] \cdot \beta)\\
&= \bigoplus_{i} \tau_{i}(\gamma) \bigoplus_{j} \sigma_{j}(\beta) \mathcal{O}_{K}\\
&= \bigoplus_{i}  \bigoplus_{j} \tau_{i}(\gamma)\sigma_{j}(\beta) \mathcal{O}_{K} \\
&= \bigoplus_{i}  \bigoplus_{j} \tau_{i} \sigma_{j}(\gamma) \sigma_{j}(\beta) \mathcal{O}_{K} 
\quad \textrm{since } \gamma \in N=L^{U}\\
&= \bigoplus_{i}  \bigoplus_{j} \tau_{i} \sigma_{j}(\gamma) \tau_{i}\sigma_{j}(\beta) \mathcal{O}_{K} 
\quad \textrm{since } \sigma_{j}(\beta) \in M=L^{I}\\
&= \bigoplus_{i}  \bigoplus_{j} \tau_{i} \sigma_{j}(\gamma\beta) \mathcal{O}_{K}\\
&= \mathcal{O}_{K}[G] \cdot (\gamma\beta).
\end{align*}
The result now follows from Lemma \ref{lem:generalities-NIBs} with $\mathfrak{I}=\mathfrak{P}_{L}^{n}$.
\end{proof}

\section{Proof of Theorem \ref{thm:main-valring-thm}}\label{sec:proof-of-assoc-order-result}

\begin{proof}[Proof of Theorem \ref{thm:main-valring-thm}]
Let $F=L^{G_{0}}$ be the inertia subfield of $L$.
Since $L/F$ is wildly ramified, we have
$\Tr_{G_{0}}(\mathcal{O}_{L}) = \Tr_{L/F}(\mathcal{O}_{L}) \subseteq \mathfrak{P}_{F}$
(see e.g.\ \cite[Th.\ 26(b)]{MR1215934}). 
Since $F/K$ is unramified, we hence have
$\pi_{K}^{-1}\Tr_{G_{0}}(\mathcal{O}_{L}) \subseteq \pi_{K}^{-1}\mathfrak{P}_{F}=\mathcal{O}_{F} 
\subseteq \mathcal{O}_{L}$.
Therefore
\[
\mathcal{O}_{K}[G][\pi_{K}^{-1}\Tr_{G_{0}}] \subseteq \mathfrak{A}_{L/K}.
\]
Let $\varepsilon$ be a free generator of $\mathfrak{P}_{L}$ over $\mathcal{O}_{K}[G]$
(e.g.\ as in Theorem \ref{thm:main-ideal-thm}).
Then
\begin{equation}\label{eq:containment}
\mathcal{O}_{K}[G][\pi_{K}^{-1}\Tr_{G_{0}}] \cdot \varepsilon 
\subseteq \mathfrak{A}_{L/K} \cdot \varepsilon \subseteq \mathcal{O}_{L}.
\end{equation}

Let $p>0$ be the residue characteristic of $K$.
Let $S \subseteq G$ be a set of representatives of the quotient group $G/G_{0}$
and let $T=\{ \pi_{K}^{-1}s\Tr_{G_{0}} \}_{s \in S}$.
Since $p$ divides $|G_{0}|$ and $G_{0}$ is normal in $G$, 
the element $\pi_{K}^{-1}\Tr_{G_{0}}$ is an $\mathcal{O}_{K}$-multiple
of either a central idempotent (if $\operatorname{char} K = 0$) or a central nilpotent element 
(if $\operatorname{char} K = p$). 
Thus $T \cup G \cup \{ 0 \}$ is multiplicatively closed and so
$T \cup G$ is an $\mathcal{O}_{K}$-spanning set for 
$\mathcal{O}_{K}[G][\pi_{K}^{-1}\Tr_{G_{0}}]$.
Furthermore, $T$ is an $\mathcal{O}_{K}$-linearly independent set.
Therefore considering generalised module indices (see e.g.\ \cite[II.4]{MR1215934}), 
we have 
$\mathfrak{P}_{K}^{|S|} = [\mathcal{O}_{K}[G][\pi_{K}^{-1}\Tr_{G_{0}}] : \mathcal{O}_{K}[G]]_{\mathcal{O}_{K}}$.

Let $\theta : K[G] \longrightarrow L$ be the $K[G]$-module 
homomorphism given by $x \mapsto x \cdot \varepsilon$.
By definition of $\varepsilon$, the restriction of $\theta$ to $\mathcal{O}_{K}[G]$ is injective;
by extension of scalars the same is true of $\theta$ itself and of thus any restriction of $\theta$.
In particular, for any two $\mathcal{O}_{K}$-lattices $M,N$ in $K[G]$, we see that
$[M:N]_{\mathcal{O}_{K}} = [M \cdot \varepsilon:N \cdot \varepsilon ]_{\mathcal{O}_{K}}$.
Therefore
\begin{align*}
[\mathcal{O}_{L}:\mathfrak{P}_{L}]_{\mathcal{O}_{K}}
&= [\mathcal{O}_{F}:\mathfrak{P}_{F}]_{\mathcal{O}_{K}}\\
&= \mathfrak{P}_{K}^{[F:K]}\\
&= \mathfrak{P}_{K}^{|S|} \\
&= [\mathcal{O}_{K}[G][\pi_{K}^{-1}\Tr_{G_{0}}] : \mathcal{O}_{K}[G]]_{\mathcal{O}_{K}} \\
&=  [\mathcal{O}_{K}[G][\pi_{K}^{-1}\Tr_{G_{0}}] \cdot \varepsilon: \mathcal{O}_{K}[G] \cdot \varepsilon]_{\mathcal{O}_{K}} \\
&=  [\mathcal{O}_{K}[G][\pi_{K}^{-1}\Tr_{G_{0}}] \cdot \varepsilon: \mathfrak{P}_{L}]_{\mathcal{O}_{K}}. 
\end{align*}
This shows that the containments of \eqref{eq:containment} are in fact equalities.
Hence the restriction of $\theta$ to $\mathfrak{A}_{L/K}$ is a bijection onto $\mathcal{O}_{L}$
and so $\varepsilon$ is a free generator of $\mathcal{O}_{L}$ over 
$\mathfrak{A}_{L/K}$. 
Furthermore, $\theta$ restricted to $\mathcal{O}_{K}[G][\pi_{K}^{-1}\Tr_{G_{0}}]$ also has image 
$\mathcal{O}_{L}$, and so injectivity of $\theta$ shows that in fact 
$\mathfrak{A}_{L/K}=\mathcal{O}_{K}[G][\pi_{K}^{-1}\Tr_{G_{0}}]$.
\end{proof}

\bibliography{WeakRamLocalGMS_Bib}{}
\bibliographystyle{amsalpha}

\end{document}